\def\a{\alpha}
\def\d{\delta}
\def\e{\epsilon}
\def\f{\frac}
\def\g{\gamma}
\def\G{\Gamma}
\def\k{\kappa}
\def\lb\{{\left\{}
\def\la{\lambda}
\def\La{\Lambda}
\def\lla{\longleftarrow}
\def\lm{\limits}
\def\lra{\longrightarrow}
\def\dllra{\Longleftrightarrow}
\def\llra{\longleftrightarrow}
\def\n{\nabla}
\def\ngth{\negthickspace}
\def\ola{\overleftarrow}
\def\Om{\Omega}
\def\om{\omega}
\def\op{\oplus}
\def\oper{\operatorname}
\def\oplm{\operatornamewithlimits}
\def\ora{\overrightarrow}
\def\ov{\overline}
\def\ova{\overarrow}
\def\ox{\otimes}
\def\p{\partial}
\def\rb\}{\right\}}
\def\s{\sigma}
\def\sbq{\subseteq}
\def\spq{\supseteq}
\def\sqp{\sqsupset}
\def\supth{{\text{th}}}
\def\T{\Theta}
\def\th{\theta}
\def\tl{\tilde}
\def\thra{\twoheadrightarrow}
\def\un{\underline}
\def\ups{\upsilon}
\def\vp{\varphi}
\def\wh{\widehat}
\def\wt{\widetilde}
\def\x{\times}
\def\z{\zeta}
\def\({\left(}
\def\){\right)}
\def\[{\left[}
\def\]{\right]}
\def\<{\left<}
\def\>{\right>}
\def\tec{Teichm\"uller\ }
\def\SA{\mathcal A}
\def\SB{\mathcal B}
\def\SC{\mathcal C}
\def\SD{\mathcal D}
\def\SE{\mathcal E}
\def\SF{\mathcal F}
\def\SG{\mathcal G}
\def\SH{\mathcal H}
\def\SI{\mathcal I}
\def\SJ{\mathcal J}
\def\SK{\mathcal K}
\def\SL{\mathcal L}
\def\SM{\mathcal M}
\def\SN{\mathcal N}
\def\SO{\mathcal O}
\def\SP{\mathcal P}
\def\SQ{\mathcal Q}
\def\SR{\mathcal R}
\def\SS{\mathcal S}
\def\ST{\mathcal T}
\def\SU{\mathcal U}
\def\SV{\mathcal V}
\def\SW{\mathcal W}
\def\SX{\mathcal X}
\def\SY{\mathcal Y}
\def\SZ{\mathcal Z}
\newcommand{\BA}{\ensuremath{\mathbf A}}
\newcommand{\BB}{\ensuremath{\mathbf B}}
\newcommand{\BC}{\ensuremath{\mathbf C}}
\newcommand{\BD}{\ensuremath{\mathbf D}}
\newcommand{\BE}{\ensuremath{\mathbf E}}
\newcommand{\BF}{\ensuremath{\mathbf F}}
\newcommand{\BG}{\ensuremath{\mathbf G}}
\newcommand{\BH}{\ensuremath{\mathbf H}}
\newcommand{\BI}{\ensuremath{\mathbf I}}
\newcommand{\BJ}{\ensuremath{\mathbf J}}
\newcommand{\BK}{\ensuremath{\mathbf K}}
\newcommand{\BL}{\ensuremath{\mathbf L}}
\newcommand{\BM}{\ensuremath{\mathbf M}}
\newcommand{\BN}{\ensuremath{\mathbf N}}
\newcommand{\BO}{\ensuremath{\mathbf O}}
\newcommand{\BP}{\ensuremath{\mathbf P}}
\newcommand{\BQ}{\ensuremath{\mathbf Q}}
\newcommand{\BR}{\ensuremath{\mathbf R}}
\newcommand{\BS}{\ensuremath{\mathbf S}}
\newcommand{\BT}{\ensuremath{\mathbf T}}
\newcommand{\BU}{\ensuremath{\mathbf U}}
\newcommand{\BV}{\ensuremath{\mathbf V}}
\newcommand{\BW}{\ensuremath{\mathbf W}}
\newcommand{\BX}{\ensuremath{\mathbf X}}
\newcommand{\BY}{\ensuremath{\mathbf Y}}
\newcommand{\BZ}{\ensuremath{\mathbf Z}}
\def\bba{{\mathbb A}}
\def\bbb{{\mathbb B}}
\def\bbc{{\mathbb C}}
\def\bbd{{\mathbb D}}
\def\bbe{{\mathbb E}}
\def\bbf{{\mathbb F}}
\def\bbg{{\mathbb G}}
\def\bbh{{\mathbb H}}
\def\bbi{{\mathbb I}}
\def\bbj{{\mathbb J}}
\def\bbk{{\mathbb K}}
\def\bbl{{\mathbb L}}
\def\bbm{{\mathbb M}}
\def\bbn{{\mathbb N}}
\def\bbo{{\mathbb O}}
\def\bbp{{\mathbb P}}
\def\bbq{{\mathbb Q}}
\def\bbr{{\mathbb R}}
\def\bbs{{\mathbb S}}
\def\bbt{{\mathbb T}}
\def\bbu{{\mathbb U}}
\def\bbv{{\mathbb V}}
\def\bbw{{\mathbb W}}
\def\bbx{{\mathbb X}}
\def\bby{{\mathbb Y}}
\def\bbz{{\mathbb Z}}
\numberwithin{equation}{section}
\theoremstyle{plain}
\newtheorem{thm}{Theorem}[section]
\newtheorem{cor}[thm]{Corollary}
\newtheorem{lem}[thm]{Lemma}
\newtheorem{prop}[thm]{Proposition}
\theoremstyle{definition}
\theoremstyle{remark}
\newtheorem*{remark}{Remark}
\newtheorem*{ack}{Acknowledgments}
\newcommand{\ps}{\f\p{\p s}}
\DeclareMathOperator{\area}{Area}
\DeclareMathOperator{\const}{const}
\DeclareMathOperator{\dista}{dist}
\DeclareMathOperator{\QD}{QD}
\DeclareMathOperator{\id}{id}
\DeclareMathOperator{\im}{Im}
\DeclareMathOperator{\re}{Re}
\DeclareMathOperator{\grad}{grad}
\DeclareMathOperator{\inj}{inj}
\DeclareMathOperator{\maxx}{max}
\DeclareMathOperator{\sll}{sl}
\DeclareMathOperator{\ree}{Re}
\DeclareMathOperator{\vol}{vol}
\newcommand{\qd}{quadratic differential}
\newcommand{\qds}{quadratic differentials}
\newcommand{\WP}{Weil-Petersson }
\newcommand{\Tec}{Teichm\"uller }
\begin{document}

\title{}
\title{The Weil-Petersson Hessian of Length on Teichm\"uller Space}

\today

   \author[M. Wolf]{Michael Wolf}
   \address{Rice University}
   \thanks{Partially supported by NSF grants 
DMS-0139877 and DMS-0505603.}
   \email{mwolf@math.rice.edu}
   \urladdr{http://www.math.rice.edu/\textasciitilde mwolf/}

\begin{abstract} We present a brief but nearly self-contained proof
of a formula for the Weil-Petersson Hessian of
the geodesic length of a closed curve (either simple or not simple)
on a hyperbolic surface. The formula is the sum of the integrals of 
two naturally defined positive functions over the geodesic, proving 
convexity of this functional over Teichmuller space (due to Wolpert
(1987)).  We then estimate this Hessian from below in terms of local
quantities and distance along the geodesic. The formula extends to 
proper arcs on punctured hyperbolic surfaces, and the estimate to 
laminations. Wolpert's result that the Thurston metric 
is a multiple of the Weil-Petersson metric directly follows on taking a 
limit of the formula over an appropriate sequence of curves.
We give further applications to upper bounds of the Hessian,
especially near pinching loci, and recover through a geometric
argument Wolpert's result on the convexity of length to the 
half-power.

\end{abstract}

\subjclass[2000]{Primary 30F60}
\maketitle


\section{Introduction} One of the foundations of modern \Tec
theory is Wolpert's \cite{Wol87} theorem that the function on \Tec
space that records the geodesic length of a simple closed curve is
convex with respect to the \WP metric. That paper provided a lower bound
for the Hessian of the length function; our purpose here is to present
a brief derivation of a concise formula for the Hessian in terms of 
natural objects on the surface associated to the curve and the tangent
vectors to \Tec space.

To explain this result and add some context, we fix some terminology
and notation.  Let $S$ be a smooth closed surface of genus $g$, 
and let $\ST(S)$ be the \Tec space of (isotopy classes of)
marked hyperbolic structures on $S$. Let $[\g]$ be a free homotopy
class of closed curves, not necessarily simple, a slight
generalization of the setting in which Wolpert worked. Typically $\g$ will
denote the representative of $[\g]$ that is geodesic with respect
to a given metric $g$, with the context making
it clear whether $\g$ is the immersed curve on the surface or an immersion
from a circle into the surface. 

For each hyperbolic surface $(S,g)$, there is a geodesic
representative
$\g=\g_g$ of $[\g]$.  By the uniformization theorem, 
as each point in $\ST(S)$ is represented by a 
unique hyperbolic metric, say $g$, the length of the geodesic
representative $\g$ of $[\g]$ defines a function 
$\ell = \ell_{\g}= \ell_{\g}([g])$ on $\ST(S)$. We investigate the 
second derivative of that function.

The Hessian of a function is well-defined once there is a background metric.
There are many metrics on $\ST(S)$; among the more basic is the \WP
metric.  Representing the tangent space to \Tec space as the space of 
Beltrami differentials which are harmonic with respect to the
hyperbolic metric $g$ representing a point in $\ST(S)$,
the \WP metric is the $L^2$ metric on
that space with respect to the hyperbolic area form $dA_g$.

The first goal of this paper is to write a formula for the \WP Hessian of
the function.

\subsection{Statement of the Formula.} 
In this subsection, we describe our formula in the simplest case; we 
discuss the situation of the second derivative of the length
function $\ell$ of a closed curve,
with the derivative taken
along a \WP geodesic $\G$. Even in this case, we
require some notation.

Let $\G=\G(t)$ a Weil-Petersson geodesic arc; the class
$[\g]$ is represented by the $\G(t)$-geodesic $\g_t$. The tangent vector
to \Tec space at $\G(0)$ is given by a harmonic Beltrami
differential, say $\mu=\f{\bar\Phi}{g_0}$. 

We can extend $\G(0)$-Fermi coordinates along the curve $\g_0$
to complex coordinates in a neighborhood of (a subarc of) 
$\g_0$.  In terms of those coordinates, the quantity  
$-\f{\im\Phi}{g_0}= \im \mu$ is well-defined.
Let $U^{\Phi}$ denote the 
solution to the ordinary differential equation
\begin{equation} \label{UPhiEquation}
U_{yy} - U = -\f{\im\Phi}{g_0},
\end{equation}
where here the geodesic is represented by a vertical 
line in the Fermi coordinate patch with a parametrization
given by arclength.

This is enough terminology so that we
may state our main result as the 

\begin{thm} \label{theorem:HessianFormula}
Along the Weil-Petersson geodesic arc
$\G(t)$, the second variation $\f{d^2}{dt^2}\ell$ of the length 
$\ell(t)=L(\G(t),[\g])$
is given by 
\begin{align} \label{formula:d2L}
\f{d^2}{dt^2}\ell(t) &= \int_{\g_0}- 2(\Delta - 2)^{-1}\f{|\Phi|^2}{g^2_0} ds 
+ \int_{\g_0}
[U^{\Phi}_y]^2 +[U^{\Phi}]^2 ds\\ \notag
&= \int_{\g_0}-2(\Delta - 2)^{-1}\f{|\Phi|^2}{g^2_0}ds \\ \notag
&\hskip.5cm +
\f{1}{2\sinh(\f{\ell}{2})}\iint_{\g_0 \times \g_0}\im\mu(p)[\cosh(d(p,q) - \f{\ell}{2})]\im\mu(q)ds(p)ds(q).
\end{align}
\end{thm}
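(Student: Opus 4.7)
The plan is to localize the computation in Fermi coordinates $(x,y)$ about the $g_0$-geodesic $\g_0$, so that $\g_0 = \{x = 0\}$, $y \in \bbr/\ell\bbz$ is arclength along $\g_0$, and $g_0 = dx^2 + \cosh^2(x)\,dy^2$. In this strip the harmonic Beltrami differential $\mu = \bar\Phi/g_0$ is an ordinary complex-valued function whose real and imaginary parts along $\g_0$ will drive the computation.

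First I would identify the first-order motion of the geodesic representative. Parametrize $\g_t$ as a normal graph $\{(u(y,t), y)\}$ in the Fermi strip, valid for small $t$ by persistence of hyperbolic geodesics, with $u(\cdot,0) \equiv 0$, and write $v(y) := \dot u(y,0)$. Linearizing the $g_t$-geodesic equation for this graph at $t = 0$ forces $v$ to satisfy the inhomogeneous Jacobi equation $v_{yy} - v = -\im\Phi/g_0$ along $\g_0$; since the homogeneous solutions $e^{\pm y}$ are not $\ell$-periodic, the unique periodic solution is $v = U^\Phi$ of \eqref{UPhiEquation}.

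Next, I would Taylor expand
\begin{equation*}
\ell(t) = \int_0^\ell \sqrt{g_t(\p_y\g_t,\,\p_y\g_t)}\,dy
\end{equation*}
in $t$ to second order. Writing $u = tv + O(t^2)$ and $g_t = g_0 + t h_1 + \tfrac12 t^2 h_2 + O(t^3)$, and collecting the $t^2$ contributions from the basepoint shift $(0,y) \mapsto (u,y)$, the vector perturbation, and the metric variation (and using $\cosh^2(tv) = 1 + t^2 v^2 + O(t^4)$), one finds
\begin{equation*}
\ell''(0) = \int_{\g_0}(v_y^2 + v^2)\,ds + I_{\mathrm{met}},
\end{equation*}
where $I_{\mathrm{met}} = \int_{\g_0}[\tfrac12 h_2(T,T) - \tfrac14 h_1(T,T)^2 + 2v_y\, h_1(N,T) + v\,(\partial_x h_1)(T,T)]\,ds$ with $T = \p_y$, $N = \p_x$. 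The first term is already the second integrand of \eqref{formula:d2L}. The essential assertion is that $I_{\mathrm{met}} = -2\int_{\g_0}(\D - 2)^{-1}(|\Phi|^2/g_0^2)\,ds$, matching the first integrand of \eqref{formula:d2L}. This follows from the WP geodesic hypothesis $\n_{\dot\G}\dot\G = 0$---which, via the Ahlfors--Wolpert formalism for the second variation of the uniformizing hyperbolic metric along a WP geodesic, constrains $h_2$ to a specific conformal-plus-gauge combination---together with the harmonicity of $\mu$, which lets the cross terms with $v$ combine with the $h_1^2$ piece to produce an expression controlled globally by the Green's operator $(\D - 2)^{-1}$.

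For the second equality in \eqref{formula:d2L}, integration by parts and the defining equation for $U^\Phi$ give $\int((U^\Phi_y)^2 + (U^\Phi)^2)\,ds = -\int U^\Phi \cdot \im\mu\,ds$, and representing $U^\Phi$ via the Green's function for $-\p_y^2 + 1$ on the circle $\bbr/\ell\bbz$---whose kernel is $\frac{1}{2\sinh(\ell/2)}\cosh(d(p,q) - \ell/2)$---produces the stated double integral. The main obstacle is precisely the identity $I_{\mathrm{met}} = -2\int_{\g_0}(\D - 2)^{-1}(|\Phi|^2/g_0^2)\,ds$: collapsing the four distinct $h_1, h_2, v$-interactions into a single $(\D - 2)^{-1}$ integrand is the central calculation, and it relies essentially on both the harmonicity of $\mu$ and the WP geodesic equation.
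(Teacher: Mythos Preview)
There is a genuine gap: you have misidentified the inhomogeneous Jacobi equation satisfied by the normal variation field $v$. Linearizing the $g_t$-geodesic condition at $t=0$ (in the harmonic-map gauge $\dot g = \Phi\,dz^2 + \bar\Phi\,d\bar z^2$) does \emph{not} give $v_{yy}-v = -\im\Phi/g_0$; it gives
\[
v_{yy}-v \;=\; -\,\partial_y\!\left(\frac{\im\Phi}{g_0}\right).
\]
This is the paper's equation \eqref{inhomogeneousJacobi}, obtained by computing $\tfrac{d}{dt}\kappa(g_t,\gamma_0)$ at $t=0$. Consequently $v$ is \emph{not} $U^\Phi$; rather $U^\Phi$ is a suitably normalized \emph{primitive} of $v$ along $\gamma_0$ (this is exactly the content of \S\ref{PrimitiveOfVariation}). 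So the energy term $\int_{\gamma_0}(v_y^2+v^2)$ in your Taylor expansion is not the same as $\int_{\gamma_0}\big((U^\Phi_y)^2+(U^\Phi)^2\big)$, and your ``essential assertion'' that the remaining metric/cross terms $I_{\mathrm{met}}$ collapse exactly to $-2\int_{\gamma_0}(\Delta-2)^{-1}(|\Phi|^2/g_0^2)$ cannot be correct as stated.

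What the paper actually does is organize the second derivative as $D^2_{11}L - D^2_{22}L$ (using the geodesic equation to eliminate the cross term), compute
\[
D^2_{11}L=\int_{\gamma_0}\Big(\frac{(\im\Phi)^2}{g_0^2}-2(\Delta-2)^{-1}\frac{|\Phi|^2}{g_0^2}\Big)\,ds,
\qquad
D^2_{22}L=\int_{\gamma_0}(v_y^2+v^2)\,ds,
\]
and then, crucially, introduce the primitive $U=U^\Phi$ of $v$ and use the chain of integrations by parts in \eqref{cleanSecondVariation} to rewrite $-\int(v_y^2+v^2) = -\int(\im\Phi/g_0)^2 + \int(U_y^2+U^2)$. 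The $\int(\im\Phi/g_0)^2$ terms cancel between $D^2_{11}L$ and $-D^2_{22}L$, leaving the formula. Your proposal is missing both the correct Jacobi equation and this primitive-and-cancellation mechanism, which is the heart of the argument. Your treatment of the kernel representation in the final paragraph is fine, but it operates on $U^\Phi$, which you have not correctly connected to the variation field $v$.
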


\subsection{Applications and Extensions.}
The formula \eqref{formula:d2L} lends itself to
applications. Using well-known techniques for solving and estimating
solutions of the relevant differential equations, we obtain results in
a number of different directions. 

Of course, it is immediately apparent that this Hessian is 
positive definite, even for curves which are not simple. The first
summand is written in terms of $2(\Delta -
2)^{-1}\f{|\Phi|^2}{g^2_0}$,
a ubiquitous term in \Tec theory whose definition involves 
the global geometry of $(S,g)$. In Lemma~\ref{subsolution}, we find a
locally defined lower bound for this quantity. 
Thus, when combined with the second expression for the second term,
we obtain an estimate defined only in terms of local quantities or
distance along the geodesic.

In addition to 
providing a means to estimate the Hessian from below, we also obtain
an improvement on the basic convexity result: from an 
easily derived formula for the gradient of length, we will see
(Corollary ~\ref{corollary:two-thirds convex}) immediately
that $\ell^{\f{2}{3}}$ is convex on all of \Tec space. A recent
theorem \cite{wolpert:behavior08} of Wolpert is that this 
may be improved to $\ell^{\f{1}{2}}$ being convex on all of \Tec
space; we provide a proof for that as well. The proof is geometric in the
sense that it hinges on a comparison of two harmonic diffeomorphisms 
of a cylinder.

It is straightforward to extend our derivations both to
laminations and to proper geodesic arcs which connect 
cusps on a hyperbolic cusped surface. The explicit nature
of formula~\eqref{formula:d2L} also lends itself to 
estimates from above, leading to estimates on the 
\WP connection near the pinching locus.  These were also
recently obtained (and announced some time ago) 
by Wolpert \cite{wolpert:behavior08}.

Our final application is a new proof of Wolpert's \cite{wolpert:thurston}
proof that the Thurston metric is (a multiple of) the \WP
metric. We consider a sequence $\{\g_n\}$ of curves whose
geodesic representatives are becoming equidistributed in 
the unit tangent bundle $T^1(S,g)$.  Thurston observed that
$\lim_{n \to \infty} \text{Hess} \ell_{\g_n}$ would be a 
positive definite quadratic form on $T_{(S,g)}\ST(S)$:
by taking a limit of the right-hand side of 
formula~\eqref{formula:d2L}, we see that this Thurston
metric is a multiple of the \WP metric.  Wolpert's argument
followed a more quasiconformal analytic tradition, while 
this derivation is more Riemannian in perspective.

\subsection{Organization of the paper.} We organize the paper as
follows. In Part 1, we derive Theorem~\ref{theorem:HessianFormula}.
Computations in \Tec theory often require fixing a gauge; here
we find it convenient to vary hyperbolic structures $(S,g_t)$
under the condition that the identity mapping 
$\text{id}:(S,g_0) \to (S,g_t)$
is harmonic.  This requirement gives the prescribed
curvature equation a particularly convenient form, and our
derivation begins with a sketch of a useful computation from
\cite{Wo89}.  The rest of the derivation is self-contained,
occupying sections 2-4.

Part 2 of the paper contains the applications and extensions. In
section 5, we find a lower bound for the integrand in the first
term; this leads to an estimate (Corollary~\ref{HessianEstimate})
for the Hessian in terms of an integral along the curve of local
quantities. Section 6 is devoted to a derivation of our
results in the setting of a curve which connects ends of a
(complete) hyperbolic punctured surface. Here, while the
length of an arc is infinite, the Hessian of a regularized
version of the length is positive and finite; section 7 extends our
work from curves to laminations. In section 8,
we use the formulae to give some geometric estimates: we 
quickly prove that 
not only is the length $\ell$ of curves convex, but so is
$\ell^{\f{2}{3}}$.  We then give a longer geometric argument
that $\ell^{\f12}$ is also convex. We give a general upper bound for the 
Hessian, as well as estimates for the \WP connection near
the Deligne-Mumford compactification divisor. Finally, in section 9,
we take a limit of formula \eqref{formula:d2L}
in Theorem~\ref{theorem:HessianFormula} over curves that are
becoming equidistributed
to recover the result that the Thurston metric is a multiple of the
\WP metric.

\begin{ack}  The author is grateful to David Dumas for conversations, 
especially on lengths of laminations; Yair Minsky for pointing out the 
question on convexity of arcs and for encouragement to find cleaner formulae;
Scott Wolpert for comments, particularly on 
section~\ref{sec:One-half convex}; and
especially Zheng (Zeno) Huang for very careful reading, criticism, the
location of some mistakes, and suggestions for improvements.
\end{ack}

\part{A formula for the Weil-Petersson Hessian of length}

We begin with a brief background discussion of the some the 
theory of \tec space and the \WP metric we will need.
Tangent vectors to \tec space at a point $(S,g)$ are represented 
by 'harmonic Beltrami differentials' of the form
$\mu = \f{\overline{\Phi}}{g}$, where
$g$ is a hyperbolic metric on $S$
and $\Phi$ is a holomorphic quadratic differential on $(S,g)$.  
The \WP inner product
of two such tangent vectors is the $L^2$ inner product 
\begin{equation}
\<\f{\overline{\Phi}}{g},\f{\overline{\Psi}}{g}\> = \re\int_S
\f{{\Phi}}{g}\f{\overline{\Psi}}{g}dArea_{g}.
\end{equation}
Much is now known about this metric: by means of an introduction 
to the subject, the \WP metric is not complete
\cite{Chu75} \cite{Wol75}, it is K\"ahler \cite{Ahl}, negatively curved with
good expressions \cite{Roy83} \cite{Tro86} \cite{Wol86} 
for and estimates \cite{Huang05} \cite{LSY04} of the curvatures,
it is quasi-isometric to the pants complex \cite{brock:WPpants03}, 
the isometry 
group is exactly the (extended) mapping class group \cite{MasurWolf:WP02}.
Of course, the stimulus for this article and an important ingredient 
in some of the results above is that the \WP
metric is geodesically convex \cite{Wol87}.

\section{The second derivative of length in space and time.} We are
interested in computing the second variation of geodesic length of a curve
along a Weil-Petersson geodesic. We imagine the setting as a fixed
differentiable surface $S$ equipped with a family of metrics $g_t$,
and on this surface there is a family of curves $\g_t$. The curves 
$\g_t$ are all freely homotopic and may or may not be simple. The defining
equation is that the curves $\g_t$ are $g_t$-geodesics; we shall shortly 
write that equation out in coordinates.

To begin though, we separate the overall second variation of length
into a term that refers only to the second variation of the metric
$g_t$ and a term that refers only to the second variation of the curve
$\g_t$. This separation is quite standard for a variational
functional. Write the length of $\g_s$ in the metric $g_t$ as
$L(g_t,\g_s)$. Then, in this language, the geodesic equation 
takes the form, for all $t$,
\begin{equation}\label{GeodesicEquation}
\ps\biggm|_{s=s_0}L(g_t,\g_s)\[\ps\g_s\] = 0,
\end{equation}
if $\g_{s_0}$ is a $g_t$-geodesic, and $\ps\g_s$ is an infinitesimal 
variation of curves through $\g_{s_0}$.

The second variation of length of the $g_t$-geodesics $\g_t$ is given by
\begin{equation}
\f{d^2}{dt^2}L(g_t,\g_t) = D^2_{11}L(g_0,\g_0)[\dot g,\dot g] + 
2D^2_{12}L(g_0,\g_0)[\dot g] [\dot\g] +
D^2_{22}L(g_0,\g_0)[\dot\g,\dot\g].  \label{BasicExpanded}
\end{equation}
where $\dot g = \f{d}{dt}g_t$ and $\dot\g = \f{d}{dt}\g_t$. 
Of course, if $\g_t$ is a $g_t$-geodesic, then we write the geodesic
equation \eqref{GeodesicEquation} above in this notation as 
$$
D_2L(g_t,\g_t)[\dot\g] = 0
$$
and so
\begin{align*}
0  &= \f d{dt}D_2L(g_t,\g_t)[\dot\g]\\
&= D_1D_2L(g_0,\g_0)[\dot g] [\dot\g] + D_2D_2L(g_0,\g_0)[\dot\g,\dot\g].
\end{align*}
Thus,
\begin{equation}
D^2_{12}L(g_0,\g_0)[\dot g] [\dot\g] =
-D^2_{22}L(g_0,\g_0)[\dot\g,\dot\g].  \label{CrossTerm}
\end{equation}

Substituting \eqref{CrossTerm} into \eqref{BasicExpanded} yields that
\begin{equation}
\f{d^2}{dt^2}L(g_t,\g_t) = D^2_{11}L(g_0,\g_0)[\dot g,\dot g] - 
D^2_{22}L(g_0,\g_0)[\dot\g] [\dot\g].  \label{ddL}
\end{equation}

Some remarks on this equation \eqref{ddL} are in order. First, note
that the term $D^2_{22}L(g_0,\g_0)[\dot\g] [\dot\g]$ is non-negative, as the
surface $(S,g_0)$ is negatively curved; indeed, this second variation
term is positive unless the vector field $\dot\g$ is tangent to the
curve $\g_0$. Thus our task is to prove that the first term
$D^2_{11}L(g_0,\g_0)[\dot g,\dot g]$ 
is larger than the second term.

In the next sections, we evaluate the terms $D^2_{11}L$ and 
$D^2_{22}L$ via different methods.

\section{Second variation of arclength of $\g_0$ in a family of
metrics} We briefly recall the computational scheme of
\cite{Wo89}. Let $\Phi\in\QD(g_0)$ denote a \qd, holomorphic with
respect to a conformal metric $g_0$. 
Then we may consider a family of metrics on $S$ decomposed by type as
\begin{equation} 
g_t = t\Phi dz^2 + g_0\(\SH(t) + \f{t^2|\Phi|^2}{g^2\SH(t)}\)dzd\bar z + 
t\ov\Phi d\bar z^2. \label{pullback}
\end{equation}
Here $z$ is a conformal coordinate for $(S,g_0)$. It is
straightforward 
to check \cite{SchoenYau:univalence} that the metric $g_t$ is hyperbolic if
\begin{equation} \label{laplacian log h}
\Delta_{g_0}\log\SH(t) = 2\SH(t) -\f{2t^2|\Phi|^2}{g_0^2\SH(t)} - 2.
\end{equation}
(Of course, the pullback of a hyperbolic metric by a diffeomorphism is
hyperbolic, and so we might imagine that if we pullback $g_t$ by a
family of diffeomorphisms $\psi_t$, then the result $\psi_t^*g_t$
would also be hyperbolic. Here we have chosen a gauge by requiring
that the identity map 
$\text{id}:(S,g_0)\to(S,g_t)$ is harmonic.)

We are interested in second variations. Differentiating twice and 
applying the maximum principle to the first derivative (see
\cite{Wo89} for an expanded description) yields
\begin{align}\label{Hdotdot}
\dot\SH  &= \f d{dt}\biggm|_{t=0}\SH(t)\equiv0\notag\\
\ddot\SH  &= \f{d^2}{dt^2}\biggm|_{t=0}\SH(t) = 
-2(\Delta - 2)^{-1}\f{2|\Phi|^2}{g^2_0}.
\end{align}
We observe that $-2(\Delta - 2)^{-1}$ is a positive 
operator and so $\ddot\SH\ge0$. Combining \eqref{pullback}  and 
\eqref{Hdotdot}, we conclude that
\begin{align}\label{MetricExpansion}
g_t = g(t) = g_0dzd\bar z &+ t(\Phi dz^2 + \ov\Phi d\bar z^2) \\ \notag
&+ t^2/2\(\f{2|\Phi|^2}{g^2_0} 
+ -2(\Delta-2)^{-1}\f{2|\Phi|^2}{g^2_0}\)g_0dzd\bar z +O(t^4).
\end{align} 

Now use that
\begin{equation*}
D^2_{11}L(g_0,\g_0)[\dot g,\dot g] = \f{d^2}{dt^2}\biggm|_0
L(g_t,\g_0) 
= \f{d^2}{dt^2}\biggm|_{t=0}\int_{\g_0}\sqrt{g_t}.
\end{equation*}
Substituting \eqref{MetricExpansion} into this last integral
with a choice of coordinate so that $\g_0$ is a line $\{\ree z=\const\}$ 
and differentiating under the integral symbol then yields
\begin{align}\label{metricvariation}
D^2_{11}L(g_0,\g_0)[\dot g,\dot g]  &= \int_{\g_0} 
- \f14(g_0)^{-3/2}(2\ree\Phi)^2 \\ \notag
&\hskip1cm + \f12\sqrt{g_0}\(\f{2|\Phi|^2}{g^2_0} 
- 2(\Delta - 2)^{-1}\f{2|\Phi|^2}{g^2_0}\)\\ \notag
&= \int_{\g_0}\{\f{(\im\Phi)^2}{g^2_0} 
- \[2(\Delta - 2)^{-1}\f{|\Phi|^2}{g^2_0}\]\}\sqrt{g_0},  
\end{align}
since $|\Phi|^2 -(\ree\Phi)^2 = (\im\Phi)^2$.
Both terms are positive, and so we see that this expression is
positive, as we expected (and needed if the 
expression $D^2_{11}L-D^2_{22}L$ is to be positive).

\begin{remark} As an easy model of this method, we quickly reproduce
a formula for the first variation of length.  (We'll have use of this
expression in later sections.) 

We compute the first derivative of length
$\ell$ along $\G(t)$ to be 
\begin{equation}
\f{d}{dt} \ell_{\g}(\G(t)) = D_1L(g_t, \g_t)[\dot{g}] + D_2L(g_t,
\g_t)[\dot{\g}].
\end{equation}

Of course, as $\g_0$ is a geodesic, the second term 
$D_2L(g_t,\g_t)[\dot{\g}] =0$ and from \eqref{pullback} and
\eqref{laplacian log h}, we find
\begin{align} \label{FirstVariation}
\f{d}{dt} \ell_{\g}(\G(t)) &= \f{d}{dt} \int_{\g_0}\sqrt{g_t}\\ \notag
&= \int_{\g_0}\f{\f{d}{dt}g_t}{2g_0} \\ \notag
&=\int_{\g_0} \f{\ree \Phi}{g_0} ds,
\end{align} 
by \eqref{MetricExpansion}, concluding the computation.
\end{remark}

\section{Second variation of $g_0$-arclength of the family $\g_t$ of
$g_t$-geodesics} Our next step is to evaluate the term
$D^2_{22}L(g_0,\g_0)=\f{d^2}{dt^2}L(g_0,\g_t)[\dot\g,\dot\g]$ 
in \eqref{ddL}. 

It is of course standard (see for example \cite{Spivak4})
that if $V$ is the variational field of a
family of curves through a geodesic $\g_0$, 
then
\begin{equation}\label{SecondVariationLength}
\f{d^2}{dt^2}L(g_0,\g_t) =  \int_{\g_0}\biggm|\f{\p V}{\p s}\biggm|^2
 - K|V|^2 ds
\end{equation}  
where $K=K(s)$ denotes the Gaussian curvature of the surface
at the point $\gamma_0(s)$. To
compare this formula \eqref{SecondVariationLength} to 
\eqref{metricvariation}, we will need to find an
expression for $V$ in terms of the \qd\ $\Phi$. That is the main goal
of 
this section.

Of course, the defining equation of $\g_t$ is that it is a geodesic,
 or equivalently that its geodesic curvature vanishes. We write this
 schematically, in a similar way that we write the 
length $L=L(g_t,\g_t)$, as a function $\k=\k(g_t,\g_t)$ of a
 metric and a curve:
\begin{equation}
\k(g_t,\g_t) = 0.
\end{equation}
Differentiating in $t$, we find that
\begin{equation} \label{DGeodesicCurvature}
\f d{dt}\k(g_0,\g_0)[\dot\g] = -\f d{dt}\k(g_0,\g_0)[\dot g].
\end{equation}
As expected, the left-hand side of \eqref{DGeodesicCurvature}
is the classical Jacobi
operator $\(\f{d^2}{ds^2}+K\)$, but the right hand side will involve
the first derivatives of $g_t$, i.e. the metric $g_0$ and the \qd\
$\Phi$. Our next task will be to find an expression for the 
solution $\dot\g$ to \eqref{DGeodesicCurvature}.

\subsection{The inhomogeneous Jacobi equation} We first expand the
right hand side of \eqref{DGeodesicCurvature}. 
We pick conformal (Fermi) coordinates $z=x+iy$ so
that the geodesic $\g_0$ is described by $\{x=\const\}$. 
These coordinates are a bit unusual in that the geodesic may
repeatedly visit the same points on the surface: it's possibly
better to regard the geodesic as embedded in the unit tangent
bundle $T^1M$ with $z=x+iy$ its projection to the surface $S$.

Then, invoking a coordinate expression for $\k$ (see \cite{Oprea},
consistent with definitions in \cite{Spivak3} and \cite{Spivak4}), we have
$$
\f d{dt}\k(g_0,\g_0)[\dot g] 
= -\f d{dt}\lb\{\G^1_{22}(t)\f{\sqrt{\det g(t)}}{g_{22}(t)^{3/2}}\rb\}
$$
where $g(t)=g_{ij}(t)$ is defined by (2.1) as
\begin{align}
g(t) &= \begin{pmatrix}
g_{11}(t)   &g_{12}(t)\\
\\
g_{22}(t)    &g_{22}(t)
\end{pmatrix} \\ \notag
&=\begin{pmatrix}
g_0 + 2t\ree\Phi   &-2t\im\Phi\\
\\
-2t\im\Phi    &g_0 - 2t\ree\Phi
\end{pmatrix} + O(t^2) = \begin{pmatrix}
E   &F\\
F   &G\end{pmatrix}.
\end{align} 
We include the very classical notation for the first fundamental form
at the end as it actually simplifies some of our notation; for
example, we write
\begin{equation} \label{kappa(t)}
\k(t) = -\G^1_{22}(t)\f{\sqrt{EG-F^2}}{G^{3/2}},
\end{equation}
where here of course the variables $E=E(t)$, $F=F(t)$, and $G=G(t)$ all depend on $t$.
Now, in this language, suppressing some of the dependence on $t$, we have
\begin{equation} \label{Christoffel(t)}
\G^1_{22}(t) = \f{2GF_y-GG_x-FG_y}{2(EG-F^2)}.
\end{equation}
Since $\k(0)=0$ and $F(0)\equiv0$, we find that
\begin{equation} \label{dGvanishes}
\f\p{\p x}g_{22}(0)= \f\p{\p x}G=0\quad\text{on }\ \g_0.
\end{equation}
We differentiate \eqref{Christoffel(t)} in $t$ and use 
\eqref{dGvanishes} and $F(0)\equiv0$ to find that
\begin{align*}
\f d{dt}\G^1_{22}(t)  &= \f1{2g^2_0} \{-4g_0(\im\Phi)_y 
+ 2g_0(\ree\Phi)_x + 2(\im\Phi)(g_0)_y\}\\
&=  \f1{2g^2_0}\{-2g_0(\im\Phi)_y + 2(\im\Phi)(g_0)_y\};
\end{align*}
here the last equality
folows from the Cauchy-Riemann equations for the 
real and imaginary parts of the holomorphic \qd\ $\Phi$. We 
conclude that
\begin{equation}\label{dChristoffel}
\f d{dt}\G^1_{22}(t) = -\f\p{\p y}\lb\{\f{\im\Phi}{g_0}\rb\}.
\end{equation}
Combining \eqref{DGeodesicCurvature}, \eqref{kappa(t)}and 
\eqref{dChristoffel} yields the equation we will focus on:
\begin{equation} \label{inhomogeneousJacobi}
\f{\p^2}{\p y^2} V - V = -\f\p{\p y}\lb\{\f{\im\Phi}{g_0}\rb\}.
\end{equation}

\begin{remark}
It is easy to compute that the Beltrami differential tangent
to our deformation is given by $\mu= \f{\bar \Phi}{g_0}$.  In that
language, our equation \eqref{inhomogeneousJacobi} becomes
\begin{equation}
V_{yy} - V = \f\p{\p y}\im \mu.
\end{equation}
\end{remark}

\subsection{The Primitive of the Variation Field}
\label{PrimitiveOfVariation}

The right-hand side of \eqref{DGeodesicCurvature} is the 
derivative of the basic quantity $-\f{\im\Phi}{g_0}$
appearing in \eqref{metricvariation}.  This term provides a link
between the two different terms $D^2_{11}L$ and $D^2_{22}L$ of the 
basic expression \eqref{ddL} for the Hessian of length.
To find the final formula for the Hessian of the 
length function, we consider the primitive of 
$V=\dot\gamma$ along $\gamma$ and use this 
to relate the expressions for $D^2_{11}L$ and
$D^2_{22}L = \int_{\gamma_0} V'^2 + V^2$.

In particular, we begin with 
the equation \eqref{inhomogeneousJacobi}
and then start by defining a particular primitive $U$ of $V$. The
procedure is in two steps, as we need to correctly choose the 
constant for the primitive. So first we set

\begin{equation}\label{defineu}
u(y) = \int_a^y V(s) ds  
\end{equation}
Note that we need to check the well-definedness of $u$ 
on $\gamma$, as it is a closed loop; on the other hand it is
enough to check that the period $u(2\pi) - u(0) = \int_{\gamma} V$
vanishes (here using the obvious notation for a pair of
endpoints for the loop). 

For convenience in the sequel, set 
\begin{equation}\label{FDefined}
\mathcal{F} = \f{\im\Phi}{g_0}.
\end{equation}
so that equation \eqref{inhomogeneousJacobi} becomes
\begin{equation}\label{JacobiWithF}
V_{yy} -V= - \mathcal{F}_y 
\end{equation}
Then, for well-definedness of $u$, we observe that
(letting subscripts indicate differentiation in the variable)

\begin{align}
 u(2\pi) - u(0) &= \int_{\gamma} V\notag\\
&=\int_{\gamma} V_{yy} + \mathcal{F}_y\notag\\
&=\int_{\gamma} (V_y +\mathcal{F})_y dy\notag\\
&=0.
\end{align}
Thus $u$ (and $u+c$, for any constant $c$) is well-defined
along $\gamma$.

Next begin again with the equation
\begin{equation}
V_{yy} - V = -\mathcal{F}_y,
\end {equation}
and then note that
\begin{align}
(u_{yy} -u +\mathcal{F})_y &= V_{yy} -V +\mathcal{F}_y\notag\\
&=0.
\end{align}
Thus we have that $u_{yy} -u +\mathcal{F} =c_0$, where $c_0$ is a constant.
In particular if we set
\begin{equation}
U=u+c_0,
\end{equation}
to be another primitive of $V$, then 
\begin{equation}\label{primitive}
U_{yy} -U = -\mathcal{F}.
\end{equation}

The point of all of this is that the positive part of 
the second variation of  length integral \eqref{SecondVariationLength}
will turn out to be the energy of $U$, while the negative
part will once again be the $L^2$ norm of $\mathcal{F}$ along $\ell_0$ (cancelled 
out by a term in the metric variation contribution $D^2_{11}L$). 

We compute the contribution $-D^2_{22}L$
from the second variation of length along
the surface through
\begin{align}
-D^2_{22}L &=-\int_{\g_0} V_y^2 +V^2 \\
&= \int_{\g_0} V_{yy}V - V^2 \qquad\text{by
parts}\notag \\
&= \int_{\g_0} (V_{yy} - V)V \notag \\
&= \int_{\g_0} (-\mathcal{F}_y)V \qquad\text{by \eqref{inhomogeneousJacobi}}\notag \\
&= \int_{\g_0} \mathcal{F}V_y \qquad\text{by parts} \notag \\
&= \int_{\g_0} \mathcal{F}U_{yy} \qquad\text{from the definition of
} U \text{ as a primitive of } V\notag \\
&= \int_{\g_0} \mathcal{F}(U-\mathcal{F}) \qquad\text{from \eqref{primitive}} \notag \\
&= -\int_{\g_0}\mathcal{F}^2 +   \int_{\g_0} U\mathcal{F}   \notag \\
&= -\int_{\g_0}\mathcal{F}^2 + \int_{\g_0} U\{-(U_{yy} -U)\} \qquad\text{from
\eqref{primitive}} \notag \\
&= -\int_{\g_0}\mathcal{F}^2 + \int_{\g_0} U_y^2 +U^2 \qquad\text{by parts}.
\end{align} \label{cleanSecondVariation}

Combining this last equation with \eqref{metricvariation} and
\eqref{SecondVariationLength},
we find that 
\begin{align} 
\f{d^2}{dt^2}L(g_t,\g_t) &= D^2_{11}L(g_0,\g_0)[\dot g,\dot g] - 
D^2_{22}L(g_0,\g_0)[\dot\g] [\dot\g] \notag \\
&= \int_{\g_0} \mathcal{F}^2 
- \[2(\Delta - 2)^{-1}\f{|\Phi|^2}{g^2_0}\] -
\int_{\g_0}V_y^2 + V^2 \notag \\
&=\int_{\g_0} \mathcal{F}^2- \[2(\Delta - 2)^{-1}\f{|\Phi|^2}{g^2_0}\]
\\ \notag
&\hskip1cm- \int_{\g_0}\mathcal{F}^2 +
\int_{\g_0} U_y^2 +U^2 \qquad \text{from \eqref{cleanSecondVariation}}
\notag\\
&= \int_{\g_0}- 2(\Delta - 2)^{-1}\f{|\Phi|^2}{g^2_0} + \int_{\g_0}
U_y^2 +U^2. 
\end{align}  \label{HessianwithU}

In summary, the Weil-Petersson Hessian of length can be expessed as the 
sum of two integrals along the curve, each of which has a positive
function
as an integrand. The first integrand
is the restriction to the curve of a solution of a differential equation on the surface, and the second is the 
energy density of a solution of a differential equation along the curve.

We record this formula as a theorem, extending
Theorem~\ref{theorem:HessianFormula} from the introduction. 
To set the notation,
let $[\g]$ be the free homotopy class of a closed curve (simple or
not) on the surface,
and $\G(t)$ a Weil-Petersson geodesic arc; the class
$[\g]$ is represented by the $\G(t)$-geodesic $\g_t$. The tangent vector
to \Tec space at $\G(0)$ is given by a harmonic Beltrami
differential, say $\f{\bar\Phi}{g_0}$. Let $\f{\bar\Psi}{g_0}$
denote a second harmonic Beltrami differential on $\G(0)$.

Let $U^{\Phi}$ and $U^{\Psi}$ denote the respective 
solutions to the ordinary differential equations (see \eqref{UPhiEquation})
\begin{equation}\label{UPhiEquationb}
U_{yy} - U = -\f{\im\Phi}{g_0}
\end{equation} 

and 
\begin{equation} \label{UPsiEquation}
U_{yy} - U = -\f{\im\Psi}{g_0}.
\end{equation}
This is enough terminology so that we
may summarize our discussion as

\begin{thm} \label{theorem:PolarHessianFormula}
Along the Weil-Petersson geodesic arc
$\G(t)$, the second variation $\f{d^2}{dt^2}\ell$ of the length 
$\ell(t)=L(\G(t),[\g])$
is given by 
\begin{equation}\label{d2lintheorem}
\f{d^2}{dt^2}\ell(t)= \int_{\g_0}- 2(\Delta - 2)^{-1}\f{|\Phi|^2}{g^2_0} ds 
+ \int_{\g_0}
[U^{\Phi}_y]^2 +[U^{\Phi}]^2 ds.
\end{equation}

More generally, the Weil-Petersson Hessian $\text{Hess} L[\f{\bar\Phi}{g_0},
  \f{\bar\Psi}{g_0}]$
is given by 

\begin{equation} \label{formula:HessL}
\text{Hess} L[\f{\bar\Phi}{g_0},
  \f{\bar\Psi}{g_0}]=
\int_{\g_0}- 2(\Delta - 2)^{-1}\f{\re\Phi\bar\Psi}{g^2_0} ds + \int_{\g_0}
U^{\Phi}_yU^{\Psi}_y +U^{\Phi}U^{\Psi} ds.
\end{equation}
\end{thm}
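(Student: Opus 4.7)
The diagonal identity \eqref{d2lintheorem} is essentially assembled already: it is obtained by combining \eqref{metricvariation} (the metric-variation contribution $D^2_{11}L$), the by-parts computation culminating in the formula for $-D^2_{22}L$ just before \eqref{HessianwithU}, and the fundamental splitting \eqref{ddL}. So the plan for the theorem is to take the diagonal case as given and deduce the polarized identity \eqref{formula:HessL} from it.

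My approach is classical polarization. Since the Weil-Petersson connection is Riemannian, $\text{Hess}\,\ell$ is a symmetric bilinear form on $T_{\G(0)}\ST(S)$, and for any tangent vector $\mu$ the quantity $\text{Hess}\,\ell[\mu,\mu]$ equals $\frac{d^2}{dt^2}\ell$ along the unique WP geodesic with initial velocity $\mu$. Given $\mu=\bar\Phi/g_0$ and $\nu=\bar\Psi/g_0$, I consider the three WP geodesics emanating from $\G(0)$ with initial tangents $\mu$, $\nu$, $\mu+\nu$, apply \eqref{d2lintheorem} to each, and then invoke
\begin{equation*}
\text{Hess}\,\ell[\mu,\nu]=\tfrac12\bigl(\text{Hess}\,\ell[\mu+\nu,\mu+\nu]-\text{Hess}\,\ell[\mu,\mu]-\text{Hess}\,\ell[\nu,\nu]\bigr).
\end{equation*}

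The calculation then rides on two linearity observations. First, the ODE \eqref{UPhiEquationb} is linear in the forcing $-\im\Phi/g_0$, so $U^{\Phi+\Psi}=U^{\Phi}+U^{\Psi}$, and expanding $[U^{\Phi+\Psi}_y]^2+[U^{\Phi+\Psi}]^2$ yields the cross terms $2(U^{\Phi}_yU^{\Psi}_y+U^{\Phi}U^{\Psi})$, which after halving matches the second integrand of \eqref{formula:HessL}. Second, $(\Delta-2)^{-1}$ is a linear operator, while the pointwise quadratic $|\Phi|^2=\Phi\bar\Phi$ polarizes as $|\Phi+\Psi|^2-|\Phi|^2-|\Psi|^2=2\re(\Phi\bar\Psi)$, so
\begin{equation*}
\tfrac12\bigl(-2(\Delta-2)^{-1}\tfrac{|\Phi+\Psi|^2}{g_0^2}+2(\Delta-2)^{-1}\tfrac{|\Phi|^2}{g_0^2}+2(\Delta-2)^{-1}\tfrac{|\Psi|^2}{g_0^2}\bigr)=-2(\Delta-2)^{-1}\tfrac{\re(\Phi\bar\Psi)}{g_0^2},
\end{equation*}
matching the first integrand of \eqref{formula:HessL}. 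Integrating along $\g_0$ and summing gives the desired identity.

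There is no genuine obstacle here; the only point requiring care is the identification $\text{Hess}\,\ell[\mu,\mu]=\ddot\ell|_0$ along the WP geodesic, which is the standard Riemannian characterization of the Hessian and is precisely the setting in which formula \eqref{d2lintheorem} was derived. Once this is in place, the polarization and the linearity of both $U^{\Phi}$ in $\Phi$ and of $(\Delta-2)^{-1}$ do all of the work, and no new analysis is needed beyond what has already been established in Sections 2--3.
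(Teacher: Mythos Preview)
Your proposal is correct and matches the paper's own proof essentially step for step: the paper likewise takes \eqref{d2lintheorem} as already established by the preceding computation, notes the linearity of the ODE to conclude $U^{\Phi+\Psi}=U^{\Phi}+U^{\Psi}$, and then invokes ``a straightforward polarization'' of the diagonal formula. You have simply made explicit the polarization of the $(\Delta-2)^{-1}|\Phi|^2/g_0^2$ term that the paper leaves implicit.
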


\begin{proof}
The solutions $U^{\Phi}$ and $U^{\Psi}$ to 
\eqref{UPhiEquationb} and \eqref{UPsiEquation}
are unique, and the equations are linear
in the unknown and the parameters 
$\Phi$ and $\Psi$. Thus the unique solution
$U^{\Phi + \Psi}$ to
\begin{equation} 
U_{yy} - U = -\f{\im(\Phi + \Psi)}{g_0}
\end{equation}
satisfies
\begin{equation}\label{UPhiPsi}
U^{\Phi + \Psi}=U^{\Phi} + U^{\Psi}.
\end{equation} 
Then a straightforward polarization of \eqref{formula:d2L},
together with our understanding \eqref{UPhiPsi},
yields \eqref{formula:HessL}.
\end{proof}

\begin{remark} In terms of our previous notation for the 
Beltrami differential $\mu = \f{\bar\Phi}{g_0}$, the equation
\eqref{UPhiEquationb} takes the form
\begin{equation}
U_{yy} - U = \im\mu.
\end{equation}
\end{remark}

\subsection{A geometric kernel representation} \label{subsection:kernel}
The second term of equation \eqref{formula:d2L} is expressed as the
energy of the solution of a differential equation.  We wish to provide
a more geometric interpretation; not only do we hope that 
this version is more appealing on its own, but it will be
important in section~\ref{Thurston} where we treat the 
Thurston metric via the Hessian of the length function.

To begin, note that the second 
term of \eqref{formula:d2L} (=\eqref{d2lintheorem})
may be written
\begin{align*}
\int_{\g_0} U_y^2 +U^2ds &= -\int_{\g_0} U\{(U_{yy} -U)ds\\
&=\int_{\g_0}U(s)\SF(s)ds
\end{align*}
in the notation where $\SF(s)= \im \f{\Phi}{g_0}= -\im\mu$, since
\begin{equation*}
U_{yy} -U = -\mathcal{F}
\end{equation*}
by \eqref{primitive}.  It is well-known that we can represent
the solution $U(s)$ to \eqref{primitive} by
\begin{equation*}
U(s)= -\int \SF(t)K(s,t)dt
\end{equation*}
for kernels $K(s,t)$ which satisfy
\begin{equation} \label{KernelEquation}
\f{d^2}{dt^2}K(s,t) - K(s,t) = \delta_s(t).
\end{equation}
It is easy to guess the solution to \eqref{KernelEquation}
using that the solution to $\f{d^2}{dt^2}K(s,t) - K(s,t) =0$ are
linear combinations of $\sinh(t)$ and $\cosh(t)$.  (Indeed,
if we represent $\g_0$ as the interval $[-L/2,L/2]$ with 
endpoints identified, set $s_0=\pm L/2$ to be an endpoint, and look
to solve \eqref{KernelEquation} on that interval, then
it is evident that setting
$K(s,t) = -\cosh(t)$ is correct up to an easily computed multiplicative
constant.) In general, for $\gamma$ parametrized by an interval 
of length $L$ (so that we may choose $|t-s|<L/2$, we have that
\begin{equation*} 
K(s,t) =\begin{cases} -\f{1}{2} \f{\cosh(s-t-L/2)}{\sinh(\f{L}{2})}, & t<s\\
        -\f{1}{2} \f{\cosh(t-s-L/2)}{\sinh(\f{L}{2})}, & t>s
        \end{cases}
\end{equation*}
solves \eqref{KernelEquation} (where we require that $|t-s|<L/2$).

Of course, the variables $s$ and $t$ parametrize the curve $\g_0$ 
with respect to arclength, and so, for $|s-t|<L/2$, we have 
$|s-t|=d(\g_0(s),\g_0(t))$. Thus $K(s,t)$ admits the description in
terms of $p=\g_0(s),q=\g_0(t)$ as
\begin{equation}
K(p,q)=  -\f{1}{2} \f{\cosh(d(p,q)-L/2)}{\sinh(L/2)}.
\end{equation}

This leads to the representation
\begin{align}
\f{d^2}{dt^2}\ell(t) &= \int_{\g_0}-2(\Delta -2)^{-1}\f{|\Phi|^2}{g^2_0} 
+ \int_{\g_0}U_y^2(s) +U^2(s)ds \notag \\
&= \int_{\g_0}-2(\Delta - 2)^{-1}\f{|\Phi|^2}{g^2_0} -
\int_{\g_0}U(s)\im\mu(s)ds \notag \\
&= \int_{\g_0}-2(\Delta - 2)^{-1}\f{|\Phi|^2}{g^2_0} -
\int_{\g_0}\im\mu(s) \int_{\g_0}K(s,t)\im\mu(t)dtds \notag \\
&= \int_{\g_0}-2(\Delta - 2)^{-1}\f{|\Phi|^2}{g^2_0} - \iint_{\g_0 \times
  \g_0}\im\mu(s)K(s,t)\im\mu(t)dtds \notag \\
&= \int_{\g_0}-2(\Delta - 2)^{-1}\f{|\Phi|^2}{g^2_0} - \iint_{\g_0 \times
  \g_0}\im\mu(p)K(p,q)\im\mu(q)ds(p)ds(q) \notag \\
&= \int_{\g_0}-2(\Delta - 2)^{-1}\f{|\Phi|^2}{g^2_0}\\ 
&\hskip.3cm+\f{1}{2\sinh(\f{L}{2})}\iint_{\g_0 \times \g_0}\im\mu(p)[\cosh(d(p,q) - \f{L}{2})]\im\mu(q)ds(p)ds(q).\label{formula:KernelFormula}
\end{align} 
where $ds(p)$ and $ds(q)$ refer to arclength measure.

\part{Extensions and Applications of the formula for the Hessian.}

\section{A lower bound expressed in terms of pointwise quantities.}\label{LowerBound}

We claim

\begin{lem}\label{subsolution} 
Let $v=1/3\f{|\Phi|^2}{g^2_0}$. Then
  $v$ is a subsolution of 
$(\Delta-2)f =-\f{2|\Phi|^2}{g^2_0}$ and in 
particular $0\le v\le-2\Delta-2)^{-1}(\f{ |\Phi|^2}{g^2_0})$.
\end{lem}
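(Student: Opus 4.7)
The plan is a direct application of the maximum principle, once a pointwise lower bound on the Laplacian of $|\Phi|^2/g_0^2$ is in place. Non-negativity $v\ge 0$ is immediate from the definition, so the substantive content is the upper bound $v\le -2(\Delta-2)^{-1}(|\Phi|^2/g_0^2)$; this is equivalent to showing that $v$ is a subsolution.

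Write $F:=|\Phi|^2/g_0^2$. The key pointwise ingredient is the Bochner-type identity $\Delta\log F=-4$ on $\{F>0\}$: indeed, $\log|\Phi|^2$ is $g_0$-harmonic away from the zeros of $\Phi$ because $\Phi$ is holomorphic, while $\Delta\log g_0=2$ on the hyperbolic surface $(S,g_0)$ since $g_0=\sigma^2$ and $-\Delta\log\sigma=K=-1$. Combined with the pointwise identity $\Delta F=F\bigl(\Delta\log F+|\nabla\log F|^2\bigr)$, this gives $\Delta F\ge -4F$ on $\{F>0\}$; at the isolated zeros of $\Phi$, $F$ attains its minimum value $0$, so $\Delta F\ge 0=-4F$ holds trivially there too. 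A short algebraic manipulation then yields
\begin{equation*}
(\Delta-2)v=\frac{1}{3}\Delta F-\frac{2}{3}F \ge -\frac{4}{3}F-\frac{2}{3}F = -\frac{2|\Phi|^2}{g_0^2},
\end{equation*}
which is precisely the subsolution inequality.

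To conclude, set $w:=-2(\Delta-2)^{-1}F$, so that $(\Delta-2)(v-w)\ge 0$. On the closed surface $(S,g_0)$ the operator $\Delta-2$ has spectrum contained in $(-\infty,-2]$; at an interior maximum $p$ of $v-w$ one has $\Delta(v-w)(p)\le 0$, and hence $(\Delta-2)(v-w)(p)\le -2(v-w)(p)$. Combined with our subsolution inequality this forces $(v-w)(p)\le 0$, and therefore $v\le w$ globally.

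The one delicate step is the Bochner identity at the zero set of $\Phi$, where $\log F=-\infty$. The simplest workaround is to interpret $\Delta F\ge -4F$ as a classical $C^2$ inequality (legitimate, since $F$ is smooth) and to verify it at the zeros by the local-minimum argument above; once this is settled, the maximum principle finishes the job with no further complications.
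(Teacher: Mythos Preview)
Your argument is correct and follows essentially the same route as the paper: both proofs establish $\Delta F\ge -4F$ via the identity $\Delta\log F=-4$ away from the zeros of $\Phi$ (using harmonicity of $\log|\Phi|^2$ and the curvature normalization $\Delta\log g_0=2$) together with $\Delta F=F(\Delta\log F+|\nabla\log F|^2)$, and then invoke the maximum principle to compare $v$ with $-2(\Delta-2)^{-1}F$. The only cosmetic difference is at the zero set: the paper tracks the Dirac-mass contribution to $\Delta_0\log|\Phi|^2$ and observes that it is annihilated by the prefactor $|\Phi|^2/g_0^2$, whereas you dispose of the zeros by noting that $F$ is smooth with a local minimum there, so $\Delta F\ge 0=-4F$ holds pointwise.
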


We begin by noting that the curvature of a metric expressed as $G|dz|^2$ is given by
$$
K(G|dz|^2) = -\f12\f1G\Delta_0\log G
$$
where $\Delta_0=\p^2_x+\p^2_y$.

Then using that $K(g_0|dz|^2)\equiv-1$ and that $|\Phi_0||dz|^2$ is a
flat metric with 
concentrated (Dirac function type) curvature singularities at the
zeroes $\Phi^{-1}(0)$ 
of $\Phi$, we see that
\begin{align*}
\Delta_0\log\f{|\Phi|^2}{g^2_0}  &= \Delta_0\log|\Phi|^2 - \Delta_0\log g^2_0\\
&= -4|\Phi| K(|\Phi| |dz|^2) + 4g_0 K(g_0)\\
&= 4|\Phi|\sum_{p\in\Phi^{-1}(0)}\pi\delta_p\deg_p\Phi - 4g_0,
\end{align*}
where $\delta_p$ indicates a delta function at $p$.
On the other hand, using that $\Delta_0\log F=\f{\Delta_0 F}F-\f{|\n_0
  F|^2}{F^2}$, 
we see we may write
$$
\Delta_0\log\f{|\Phi|^2}{g^2_0} = \f{\Delta_0\f{|\Phi|^2}{g^2_0}}{\f{|\Phi|^2}{g^2_0}} - \f{|\n_0\(\f{|\Phi|^2}{g^2_0}\)|^2}{\(\f{|\Phi|^2}{g^2_0}\)^2}.
$$
Putting the last two of these equations together yields
$$
\f1{g_0}\Delta_0\f{|\Phi|^2}{g^2_0} = \f{|\Phi|^2}{g^2_0}\(4\f{|\Phi|}{g_0}\sum_{p\in\Phi^{-1}(0)}\pi\delta_p\deg_p\Phi\) - 4 \f{|\Phi|^2}{g^2_0} + \f{|\n_0\(\f{|\Phi|^2}{g^2_0}\)|^2}{\f{|\Phi|^2}{g^2_0}g_0}
$$
In particular writing $\Delta=\f1{g_0}\Delta_0$ for the $g_0$-Laplace
Beltrami operator 
on $S$, and noting the vanishing of the first term on the right hand side, we conclude that
$$
\Delta\f{|\Phi|^2}{g^2_0}\ge-4\f{|\Phi|^2}{g^2_0}.
$$
We are of course interested in the operator $\Delta-2$, so we note the
obvious 
implication that
$$
(\Delta - 2)\f{|\Phi|^2}{g^2_0}\ge-6\f{|\Phi|^2}{g^2_0}
$$
so that $v=1/3\f{|\Phi|^2}{g^2_0}$ is a 
subsolution for the equation $(\Delta - 2)f=-2\f{|\Phi|^2}{g^2_0}$.

It is obvious that $v=\f13\f{|\Phi|^2}{g^2_0}\ge0$, and if $f$
satisfies $(\Delta-2)f = -\f{2|\Phi|^2}{g^2_0}$, then
$\Delta(f-v)\le2(f-v)$ and so the minimum principle guarantees that at
a minimum of $(f-v)$, we have $f-v\ge0$; hence $f-v\ge0$ everywhere, 
concluding the proof of the lemma. \qed

Combining Lemma~\ref{subsolution} with
Theorem~\ref{theorem:HessianFormula} 
we obtain

\begin{cor}\label{HessianEstimate} 
Let $\Phi\in\QD(g_0)$ be a holomorphic \qd\ in $(\Sigma,g_0)$ and let
$\G(t)$ denote a Weil-Petersson geodesic arc with initial tangent
vector given by the harmonic Beltrami differential $\ov\Phi
g^{-1}_0$. Let $\ell(t)$ denote the geodesic length of a
representative $\g_t$ of a curve class $[\g]$ on $S$. 
Then, for $\g_0$ the geodesic represented of $[\g]$ on $\G(0)$, we have
$$
\f{d^2}{dt^2}\biggm|_{t=0}\ell(t)\ge\f{1}{3}\int_{\g_0}\f{|\Phi|^2}{g^2_0}ds.\qed
$$
\end{cor}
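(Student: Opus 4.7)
The plan is to chain together the two inputs already prepared in the paper: the exact formula of Theorem~\ref{theorem:HessianFormula} for the Hessian, and the pointwise subsolution bound of Lemma~\ref{subsolution}. No further analysis is needed; the argument is a one-line integration of an inequality.

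First I would start from the identity
\begin{equation*}
\f{d^2}{dt^2}\biggm|_{t=0}\ell(t)= \int_{\g_0}-2(\Delta - 2)^{-1}\f{|\Phi|^2}{g^2_0}\, ds + \int_{\g_0}\bigl([U^{\Phi}_y]^2 +[U^{\Phi}]^2\bigr)\, ds
\end{equation*}
supplied by Theorem~\ref{theorem:HessianFormula}. The second integrand is a sum of two squares, hence pointwise non-negative along $\g_0$, so discarding it preserves the inequality and yields
\begin{equation*}
\f{d^2}{dt^2}\biggm|_{t=0}\ell(t)\ \ge\ \int_{\g_0}-2(\Delta - 2)^{-1}\f{|\Phi|^2}{g^2_0}\, ds.
\end{equation*}

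Next I would invoke Lemma~\ref{subsolution}, which asserts the pointwise bound
\begin{equation*}
-2(\Delta - 2)^{-1}\f{|\Phi|^2}{g^2_0}\ \ge\ \f{1}{3}\f{|\Phi|^2}{g^2_0}
\end{equation*}
as functions on $(S,g_0)$, and hence in particular when restricted to $\g_0$. Integrating this inequality against the arclength measure $ds$ along $\g_0$ gives the asserted estimate and completes the proof.

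The only obstacle worth flagging is verifying that the pointwise inequality of Lemma~\ref{subsolution} is indeed valid on the locus where $\Phi$ vanishes and at any branch points the immersed curve $\g_0$ may visit: at zeros of $\Phi$ both sides vanish, and since the argument is purely pointwise, the integration along a (possibly self-intersecting) immersed geodesic poses no issue. Everything else is immediate from the two cited results.
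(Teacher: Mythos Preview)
Your proof is correct and matches the paper's approach exactly: the corollary is obtained simply by combining Theorem~\ref{theorem:HessianFormula} with Lemma~\ref{subsolution}, dropping the nonnegative energy term $\int_{\g_0}[U^{\Phi}_y]^2+[U^{\Phi}]^2\,ds$ and then integrating the pointwise inequality along $\g_0$.
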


We will apply this estimate in the section~\ref{sec:Two-thirds convex}.

\section{The Second Variation of the length of an Arc}

In this section, we adapt our derivation to the case where $S$ is a
surface of finite genus with a finite number of punctures, and we are
interested in the variation of length of an arc ${\a}$ that runs between
two of the punctures (or a puncture itself). Naturally, the length of
such an arc is infinite, so we will be discussing the variation of some
regularization of its length; nevertheless, all of the basic
considerations will extend to this case with only minor 
modifications.

\subsection{Notation and preliminaries} Let ${\a}_t$ be the geodesic on
$(S,g_t)$ that connects punctures $p$ and $q$ in a fixed homotopy
class (rel $p$ and $q$). Consider a sequence of points $p_n$,
$q_n\in {\a}_0$ with $p_n\to p$ and $q_n\to q$ and let ${\a}_{t,n}$ denote
the finite length $g_t$-geodesic arc connecting $p_n$ to $q_n$ which
is homotopic (rel $p_n$, $q_n$) to ${\a}_{0,n}\subset {\a}_0$. 
Our plan is to derive a formula for 
$$
\f{d^2}{dt^2} L(g_t,{\a}_{t,n}),
$$
show that the limit exists and is independent of the choice of the 
sequence $\{p_n,q_n\}$.

We learned while preparing this manuscript that Wolpert 
\cite{wolpert:WPextension07} recently 
treated the analogous case of finite length
arcs between horocycles.

It is easy to check that the formal preliminaries remain the same as in
the 
derivation of \eqref{ddL}, and so we conclude
\begin{equation}\label{ArcDifference}
\f{d^2}{dt} L(g_t,{\a}_{t,n}) = D^2_{11} L(g_0,{\a}_{0,n})[\dot g,\dot g] 
- D^2_{22} L(g_0,{\a}_{0,n})[\dot{\a}_{0,n},\dot{\a}_{0,n}].
\end{equation}

\subsection{The Second Variation of Arclength of ${\a}_{0,n}$ in $g_t$}
As in the case of a closed curve, the first term in
\eqref{ArcDifference} is relatively straightforward to compute; the
only new issue to consider is the dependence of the term on the choice
of endpoints $p_n$, $q_n$ of ${\a}_{t,n}$. Indeed, 
exactly as in the derivation of \eqref{metricvariation}                    , 
we formally compute
\begin{equation}\label{D11n}
D^2_{11} L(g_0,{\a}_{0,n}) = \int_{\a_{0,n}}\lb\{\f{|\im\Phi|^2}{g^2_0} 
-2(\Delta - 2)^{-1}\f{|\Phi|^2}{g^2_0})\rb\}\sqrt{g_0}
\end{equation}
where the principal issue is to determine the meaning of 
$(\Delta-2)^{-1}\f{|\Phi|^2}{g^2_0}$.

\subsection{Variations of metrics of finite area}

The basic point here 
in understanding $-2(\Delta-2)^{-1}\f{|\Phi|^2}{g^2_0}$ is
to construe it as $\ddot\SH$ for the family of pullback
metrics $g_t$ in \eqref{pullback}. As these maps 
$\text{id}: (S, {g_0}) \to (S, g_t)$ are harmonic, we can apply some results 
from the theory of harmonic maps between cusped hyperbolic surfaces.


In this direction, results in \cite{Wo91a} (Theorem~5.1) and of 
Lohkamp (see the remark after Theorem 4 in
\cite{loh91}, especially with Lemma~12 informed by Proposition~3.13 
in \cite{Wo91a}) proved that $\SH(t)\in C^{k,\a}(S,g_0)$ was analytic in
$t$ on the compactified surface $\bar{M}$.
In particular $\ddot\SH$ is bounded.

Indeed, we can easily show from this that 
$\ddot\SH=O(\f1{(\log\f1r)^\alpha}) = O(y^{-\a})$
for some $\alpha\in(0,1)$ as $r\to0$. The basic elements of this argument is
that $\f1{(\log\f1r)^\alpha}=y^{-\a}$ for some $\alpha\in(0,1)$ is a 
supersolution of the equation
$(\Delta-2)\ddot\SH=-2\|\Phi\|^2$ on the cusp, as well as the point that the kernel
of $(\Delta-2)$ on a half-infinite cylinder $\SC=\{\im z>1,|\ree
z|<1/2\}$ (with the standard identifications) is spanned by 
the pair of functions $k_1(z)=y^2$
and $k_2(z)=y^{-1}$. With that background, consider on the finite
cylinder $\{1<\im z<y_n\}$, a function $H_j(z)$ of the form
$H_j(z)=C_0y^{-\alpha}+C_1y^{-1}+\e_jy^2$.

Then for appropriate choices of
$\e_j\to0$, we find that $H_j(z)$  majorizes $\ddot\SH$; letting
$j\to\infty$ and $\e_j\to0$ while $C_0$ and $C_1$ stay bounded (as the
boundary values $\ddot\SH(z)$ for $\{\im z=1\}$ are fixed
independently of $j$) allows us to conclude that $\ddot\SH(z)$ decays
like $C_0y^{-\alpha}+C_1y^{-1}$. Thus
$\ddot\SH(z)=O(y^{-\alpha})=O(\f1{(\log\f1r)^{\alpha}})$.

Looking ahead to the final form of the second variation of length, 
it is worth recording the 

\begin{prop} \label{First Term Converges} Let $p_n\to p$ and  $q_n\to
  q$, and let ${\a}_{0,n}$ be the geodesic arc connecting $p_n$ to $q_n$
  as 
in the introduction to the section. Then
$$
\int_{{\a}_{0,n}}-2(\Delta - 2)^{-1}\f{|\Phi|^2}{g^2_0} ds 
= \int_{{\a}_{0,n}}\f12\ddot\SH ds
$$
converges as $n\to\infty$.
\end{prop}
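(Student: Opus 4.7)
The plan is to establish absolute convergence of the improper integral $\int_{\a_0}-2(\Delta-2)^{-1}\f{|\Phi|^2}{g^2_0}\,ds$ over the full infinite geodesic arc $\a_0$ connecting $p$ to $q$; this yields both existence of the limit in the proposition and its independence of the approximating sequence $\{p_n,q_n\}$ by a Cauchy-criterion argument.

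The issue is purely local near the two cusps, since on any compact subarc bounded away from the punctures the integrand $\ddot\SH = -2(\Delta-2)^{-1}\f{|\Phi|^2}{g^2_0}$ is uniformly bounded and continuous. I would focus on a single cusp, say $p$, and work in standard cusp coordinates $\{|x|<\f12,\ y>y_0\}$ with hyperbolic metric $(dx^2+dy^2)/y^2$, in which $p$ corresponds to $y\to\infty$. In these coordinates the geodesic arc $\a_0$ asymptotes to a vertical half-line $\{x=\const\}$, and the hyperbolic arclength element satisfies $ds = (1+o(1))\,dy/y$ along $\a_0$ as $y\to\infty$.

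The essential input is the decay established just above the proposition: $\ddot\SH(z) = O(y^{-\a})$ as $y\to\infty$, for some $\a\in(0,1)$. Combining with the arclength estimate gives
\[
|\ddot\SH|\,ds = O(y^{-1-\a})\,dy
\]
along $\a_0$ in the cusp region, whose improper integral from any finite height $y_0$ up to $\infty$ is convergent. The identical analysis at $q$ shows that $\ddot\SH$ is absolutely integrable over all of $\a_0$.

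With absolute integrability in hand, the truncated integrals $\int_{\a_{0,n}}\ddot\SH\,ds$ form a Cauchy sequence: for $m,n$ large, the difference is controlled by the tail, which is of order $y_n^{-\a}+y_m^{-\a}\to 0$ as the endpoints $p_n,q_n$ approach the cusps. The limit is therefore $\int_{\a_0}\ddot\SH\,ds$ regardless of the choice of sequence. The main (mild) technical point is justifying the arclength comparison $ds\sim dy/y$ for a geodesic that is only asymptotically, not exactly, vertical; this is handled by a standard comparison with vertical geodesics in the cusp coordinate model, together with the observation that the Fermi-coordinate decomposition used earlier in the paper adapts to this asymptotic setting with errors absorbed into the $o(1)$.
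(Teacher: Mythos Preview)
Your argument is correct and matches the paper's: both use the decay $\ddot{\SH} = O(y^{-\alpha})$ together with $ds = dy/y$ in cusp coordinates to show the tail is integrable. The paper is marginally shorter, invoking positivity of the integrand (so that the monotone increasing truncated integrals, being $O(1)$, converge) rather than a Cauchy criterion, and treats $\a_0$ as exactly vertical in the cusp model---which it is, being a geodesic into the cusp---so your $(1+o(1))$ correction is unnecessary.
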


\begin{proof} In the upper half plane coordinates, we have that for
  each 
end of an ${\a}_{0,n}$,
\begin{equation}
\int_{{\a}_n}\ddot\SH ds = \int^{y_n}_a\ddot\SH\f{dy}y = 
\int^{y_n}_aO(\f1{y^{\alpha}})\f{dy}y = O(1)
\quad\text{as }\ n\to\infty.
\end{equation}
The proposition then follows from the integrals being positive.
 \end{proof}

 \subsection{The Jacobi Field for an Arc}
 
The next term we must address is the second term in
\eqref{ArcDifference}. The variational vector field, defined
geometrically, also satisfies 
\eqref{inhomogeneousJacobi}. Here, of
course the variational field $V_n$ depends on $n$, as it is defined in
terms of $\alpha_{t,n}$; our notation is meant to reflect that. The 
form of the second
variation of arclength is also unchanged at
$\int_{{\a}_{0,n}}V^2_{n,y}+V^2_n$, as the boundary term vanishes once
we require the family of curves ${\a}_{t,n}$ to have $p_n$ and $q_n$ as 
endpoints, independently of $t$.

The primitive
$U_n=\int^y_aV_n(s)ds$ exists as before --- in fact, in the setting of
an open arc, there is not a well-definedness issue to check, though
we still have to adjust by a constant.  In particular,
note that for $\hat{U}_n = \int_{p_n}^y V_{n}(s) ds$
(so that $\hat{U}_n = \int_{p_n}^y V_{n}''(s) - (\im \mu)'(s)ds$), we have
$\hat{U}_n'' - \hat{U}_n = \im \mu + (V_n'(p_n) - \im \mu(p_n))$.
So set $U_n = \int_{p_n}^y V_{n}(s) ds + (V_n'(p_n) - \im \mu(p_n))$.
Then $U_n$ solves the (boundary value) problem
\begin{align} \label{Unbdy}
U_n'' - U_n &= \im \mu \\
U_n(p_n) &= V_n'(p_n) - \im \mu(p_n) \notag \\
U_n(q_n) &= V_n'(q_n) - \im \mu(q_n). \notag
\end{align}
The last boundary condition follows after applying the 
fundamental theorem of calculus to 
\begin{align*}
U_n(q_n) &= \int^{q_n}_{p_n} V_n(s)ds + V_n'(p_n) - \im \mu(p_n)\\
&= \int^{q_n}_{p_n} V_n''(s) - (\im \mu)'(s) ds + V_n'(p_n) - \im
\mu(p_n)
\end{align*}
We will soon show that these boundary values, though non-zero, tend to 
zero and have no effect on the limiting relation.

At this stage, it is useful to write a kernel representation for
$V_n$. Let $L_n = d(p_n,q_n)$, and parametrize the arc $\a_{0,n}$ by
$[-\f{L_n}{2}, \f{L_n}{2}]$. It will not ultimately affect the  results
if in these coordinates the midpoints of $[p_n, q_n]$ (represented in 
our coordinates by the origin) do not remain in a compact set.

The kernel for the operator $\f{d^2}{dy^2}-1$ on the segment 
$[-\f{L_n}{2}, \f{L_n}{2}]$ is given by
\begin{equation} \label{kernelKn}
K_n(y,s)=\begin{cases}
-\f{\sinh(\f{L_n}{2}+s)\sinh(\f{L_n}{2}-y)}{\sinh(L_n)},
&-\f{L_n}{2}\le s \le y \le \f{L_n}{2}\\
-\f{\sinh(\f{L_n}{2}-s)\sinh(\f{L_n}{2}+y)}{\sinh(L_n)},
&-\f{L_n}{2}\le y \le s \le \f{L_n}{2}.
\end{cases}
\end{equation}
This gives the representations
\begin{equation} \label{VnRep}
V_n(y) = \int_{p_n}^{q_n} K_n(y,s) (\im \mu)'(s) ds
\end{equation}
and
\begin{equation} \label{UnRep}
U_n(y) = \int_{p_n}^{q_n} K_n(y,s) (\im \mu)(s) ds + a_n\cosh(y) + b_n\sinh(y),
\end{equation}
where $a_n$ and $b_n$ are chosen to satisfy the boundary conditions in
\eqref{Unbdy}.

We will estimate the asymptotics of these boundary terms in the 
next subsection, but we display the preliminaries for that analysis
here, in the present context of integral formulas for the relevant
geometric objects.

One of the terms in the boundary condition is given by $V_n'$ at
the corresponding boundary point. For example, $V_n'(p_n)$
is given by 
\begin{equation} \label{Vnprime}
V_n'(p_n) = \int_{p_n}^{q_n} \f{d}{dy}\bigg|_{y=p_n^+}K_n(y,s) (\im
\mu)'(s) ds.
\end{equation}
We then compute that
\begin{equation} \label{Knprime}
\f{d}{dy} K_n(y,s)= \begin{cases} \f{\sinh(\f{L_n}{2}+s)\cosh(\f{L_n}{2}-y)}{\sinh(L_n)},
&-\f{L_n}{2}\le s \le y \le \f{L_n}{2}\\
-\f{\sinh(\f{L_n}{2}-s)\cosh(\f{L_n}{2}+y)}{\sinh(L_n)},
&-\f{L_n}{2}\le y \le s \le \f{L_n}{2}
\end{cases}.
\end{equation}
Combining \eqref{Vnprime} and \eqref{Knprime} yields the
representation
\begin{equation} \label{Vnprimebdy}
V_n'(p_n) = -\int_{-\f{L_n}{2}}^{\f{L_n}{2}} \f{\sinh(\f{L_n}{2}-s)}{\sinh(L_n)}(\im
\mu)'(\a_n(s)) ds.
\end{equation}

The final matter is the analogue of the 
derivation of \eqref{HessianwithU} from the 
variation of
arclength. This involves three integrations by parts, and so we need
to consider the boundary terms from each integration; however, all of
the terms have either $V_n$ or $U_{n,y}=V_n$ as a factor, and so all
vanish at the endpoints $p_n$ 
and $q_n$.
 
We conclude that
\begin{equation}\label{D22n}
-D^2_{22}L(g_0,{\a}_{0,n}) (\dot{\a}_{0,n},\dot{\a}_{0,n}) 
= -\int_{{\a}_{0,n}}\mathcal{F}^2 + \int_{{\a}_{0,n}}U^2_{n,s} +U^2_n 
\end{equation}
in the notation of \eqref{FDefined}.

We combine formulae \eqref{D11n} 
and \eqref{D22n} for the derivatives $D_{11}L(g_0,{\a}_{0,n})$
 and $D_{22}L(g_0,{\a}_{0,n})$ to find
\begin{equation}\label{lengthn}
\f{d^2}{dt^2} L(g_t,{\a}_{t,n}) = \int_{{\a}_{0,n}} - 
2(\Delta - 2)^{-1}\f{|\Phi|^2}{g^2_0} + \int_{{\a}_{0,n}} U^2_{n,s} +U^2_nds.
\end{equation}

\subsection{Passage to the limit}
Naturally, we are interested in taking the limit 
of \eqref{lengthn} as $n\to\infty$. That
the first term converges in the content of Proposition~\ref{First Term 
Converges}.

For the second term, we need to estimate the asymptotics of
$U_{n,y}=V_n$ 
and $U_n$. We claim

\begin{prop} \label{Vn limit} The fields $V_n$ converge to a field $V$
  defined on the entire arc ${\a}_0$. The primitives $U_n$ may be chosen
  so that, 
not only does $U_n$ converge to a field $U$ with $U_s=V$, but also
\begin{equation}
\int_{{\a}_{0,n}}U^2_{n,s} + U^2_n\lra\int_{{\a}_0} U^2_s + U^2 < \infty.
\end{equation}
Here $U_s$ and $U_{n,s}$ indicate derivatives of $U$ and $U_n$ with
respect to the arclength parameter $s$.
\end{prop}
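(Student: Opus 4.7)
The plan is to pass to the limit in the kernel representations \eqref{VnRep} and \eqref{UnRep} using dominated convergence, with the key input being the very rapid decay of $\im\mu$ along $\a_0$ as it enters each cusp. I would first record the kernel asymptotics: applying $\sinh A\sinh B=\tfrac{1}{2}[\cosh(A+B)-\cosh(A-B)]$ rewrites \eqref{kernelKn} as
\begin{equation*}
K_n(y,s) = -\f{\cosh(L_n-|y-s|)-\cosh(y+s)}{2\sinh(L_n)},
\end{equation*}
which converges pointwise to the $\bbr$-Green's function $K_\infty(y,s)=-\tfrac{1}{2}e^{-|y-s|}$ of $\f{d^2}{ds^2}-1$. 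Using $\cosh(u)\le e^u$ for $u\ge 0$, one gets the uniform estimate $|K_n(y,s)|\le Ce^{-|y-s|}$ for all $L_n$ bounded below.

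Next, I would establish decay of $\im\mu$ along $\a_0$ into a cusp. In upper-half-plane cusp coordinates $z=x+iy$ with $g_0 = y^{-2}$, the uniformizing variable $w=e^{2\pi iz}$ exhibits a meromorphic quadratic differential $\Phi$ with at most a simple pole at the puncture as $\Phi = \phi(z)\,dz^2$ with $\phi(z) = O(e^{-2\pi y})$, whence $|\im\mu| = |\im\phi|\cdot y^2 = O(e^{-cy})$. Since $\a_0$ enters the cusp along $\{x=\const\}$ parametrized by $s=\log y$, this gives $|\im\mu|(s),|(\im\mu)'(s)|=O(\exp(-ce^{|s|}))$ at both ends. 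Combined with the kernel bound, dominated convergence applied to \eqref{VnRep} yields pointwise convergence $V_n(y)\to V(y):=\int_{\a_0}K_\infty(y,s)(\im\mu)'(s)\,ds$. A parallel argument using \eqref{Vnprimebdy} shows $V_n'(p_n), V_n'(q_n)\to 0$; together with $\im\mu(p_n),\im\mu(q_n)\to 0$, the boundary data \eqref{Unbdy} tend to zero, so the constants $a_n, b_n$ in \eqref{UnRep} may be chosen so that $a_n,b_n\to 0$. Consequently $U_n(y)\to U(y):=\int_{\a_0}K_\infty(y,s)\im\mu(s)\,ds$, and $U$ satisfies $U_{ss}-U=\im\mu$ with $U_s=V$.

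For the $L^2$ convergence, the uniform kernel bound together with the cusp decay of $\im\mu$ produces uniform-in-$n$ estimates $|U_n(y)| + |V_n(y)|\le C e^{-c|y|}$ on $\a_{0,n}$ (with $y$ the arclength parameter centered on a compact set), and in particular an integrable majorant for $U_{n,s}^2+U_n^2$. Dominated convergence then gives
\begin{equation*}
\int_{\a_{0,n}} U_{n,s}^2+U_n^2\,ds \;\longrightarrow\; \int_{\a_0} U_s^2+U^2\,ds \;<\; \infty.
\end{equation*}
The principal obstacle is the careful bookkeeping in the decay step: one must tie the harmonic (WP-bounded) structure of $\mu$ at each cusp to the decay of $\im\mu$ and $(\im\mu)'$ along $\a_0$ uniformly, then track this decay through the kernel estimates so that the dominated convergence hypotheses are met simultaneously at both receding endpoints $p_n, q_n$.
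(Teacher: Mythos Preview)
Your plan is essentially the paper's own proof: establish the cusp decay of $\im\mu$ and $(\im\mu)'$ (the paper does this in disk coordinates as $|\mu|=O(r(\log\tfrac1r)^2)$, you in half-plane coordinates), pass to the limit in the kernel $K_n\to-\tfrac12 e^{-|y-s|}$, show the boundary data in \eqref{Unbdy} vanish so that the homogeneous correction is negligible, and then conclude convergence of the energy integrals. One point to tighten: the correction term $a_n\cosh y+b_n\sinh y$ does \emph{not} satisfy your claimed uniform bound $Ce^{-c|y|}$, so your dominated-convergence majorant for $U_n$ is not literally correct; what your boundary-data argument actually yields (and what the paper records as Lemma~\ref{anbndecay}) is the stronger decay $a_n,b_n=o(e^{-cL_n/2})$, which makes the correction uniformly $o(1)$ on $[-L_n/2,L_n/2]$ with $L^2$-contribution tending to zero --- after that adjustment the argument goes through exactly as you outline.
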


\begin{proof} 

It is useful to begin with an observation.

\begin{lem} \label{lemma:mudecay} 
Near an end of $(S,g_0)$ with coordinates from $\{|z|<1\}$, we have
$|\mu| = O(r(\log\f{1}{r})^2)$ and $|\f{d}{ds}\mu|= O(r(\log\f{1}{r})^3)$.
\end{lem}

\begin{proof} In the coordinate disk, the holomorphic quadratic
  differential $\Phi = \f{c}{z} + h.o.t$, while 
$\f{1}{g_0} = r^2(\log\f{1}{r})^2$ and
  $\f{d}{ds}=r\log\f{1}{r}\f{\partial}{\partial r}$ along radial 
geodesics.  The estimates are then immediate.
\end{proof}

Recall the terms $a_n$ and $b_n$ in \eqref{UnRep} that adjust the
solution $U_n$ for the inhomogeneous boundary conditions.
Our next goal is to prove that these adjustments are asymptotically
inconsequential.

\begin{lem} \label{anbndecay}
$a_n,b_n = o(e^{-c\f{L_n}{2}})$.
\end{lem}
\begin{proof}  The goal is to show that the boundary conditions 
given in \eqref{Unbdy} are small. In that case,
since we would have 
$a_n\cosh(y) + b_n\sinh(y) = o(1)$ for $y= \pm\f{L}{2}$
from \eqref{UnRep}, we easily see the statement of the lemma.

There are two types of terms
in the expression \eqref{Unbdy}: those given by $|(\im \mu)(p_n)|$ and 
$|(\im \mu)(q_n)|$, and those given by $V_n'(p_n)$ and 
$V_n'(q_n)$.  We will treat them separately.

We first claim that $|(\im \mu)(p_n)|$ and 
$|(\im \mu)(q_n)|$ decay as $n \to \infty$. This of course
easily follows from Lemma~\ref{lemma:mudecay}.

It is only slightly more difficult to use \eqref{Vnprimebdy}
to show that $V_n'(p_n)$ and 
$V_n'(q_n)$ decay to zero as $n \to \infty$. To see this,
note that for any fixed $y^*$, we have 
$\int_{y^*}^{\f{L_n}{2}} \f{\sinh(\f{L_n}{2}-s)}{\sinh(L_n)} ds =
O(e^{-\f{L_n}{2}-y^*})$, and that
$|(\im \mu)'|$ is bounded on all of the geodesic arc $\a_0$, while 
vanishing into the cusp by Lemma~\ref{lemma:mudecay}.  (Note also that 
$\int_{-\f{L_n}{2}}^{\f{L_n}{2}} \f{\sinh(\f{L_n}{2}-s)}{\sinh(L_n)}
ds \le 1$.) These preliminaries are enough to estimate $V_n'(p_n)$
(with an analogous argument for $V_n'(q_n)$).

We begin from \eqref{Vnprimebdy} with 
\begin{align*}
|V_n'(p_n)| &\le  \int_{-\f{L_n}{2}}^{y^*} \f{\sinh(\f{L_n}{2}-s)}{\sinh(L_n)}|(\im
\mu)'(\a_n(s))| ds \\ \notag
&\hskip1cm+ \int_{y^*}^{\f{L_n}{2}} \f{\sinh(\f{L_n}{2}-s)}{\sinh(L_n)}|(\im
\mu)'(\a_n(s))| ds \\
&\le \max_{[p_n, y^*]}|(\im \mu)'|\int_{-\f{L_n}{2}}^{y^*}
  \f{\sinh(\f{L_n}{2}-s)}{\sinh(L_n)}ds \\ \notag
&\hskip1cm+ \max_{\a_0}|(\im
  \mu)'|\int_{y^*}^{\f{L_n}{2}} \f{\sinh(\f{L_n}{2}-s)}{\sinh(L_n)}ds.
\end{align*}
Then, for $\e$ small, use Lemma~\ref{lemma:mudecay} to pick
$y^*$ so that $\max_{[p_n, y^*]}|(\im \mu)'| \le \e$. Then the first
term is bounded by $\e$ while the second term is bounded by 
$\max_{\a_0}|(\im \mu)'|O(e^{-\f{L_n}{2}-y^*})$. Letting
$L_n \to \infty$ and $-\f{L_n}{2}\le y^* \to -\infty$ somewhat more
slowly than $-\f{L_n}{2} \to -\infty$ (e.g. $y^* = -\f{L_n}{4}$)
shows that $V_n'(p_n) \to 0$.
\end{proof}

With these preliminaries, we easily conclude the proof of the 
proposition.  As $L_n \to \infty$, we find that for fixed $y$,
the kernels $K_n(y,s)$ limit on
$$
K(y,s)=\begin{cases} -\f{e^{s-y}}{2}, &s\le y \\
-\f{e^{y-s}}{2}, &y\le s
\end{cases}
$$
which we can write succinctly as
$$
K(y,s)=-\f{1}{2}e^{-d(y,s)}
$$
As both $(\im \mu)(s)$ and $(\im\mu)'(s)$ decay while 
$K_n(y,s)$ converge to an integrable function, we see that 
the formulas \eqref{VnRep} and \eqref{UnRep}
show uniform convergence of $V_n$ to a well-defined variation
field $V$, as well as primitives $U_n$ to a well-defined
(primitive) function $U$. As $U_n' = V_n$, the uniform convergence of 
$\{U_n\}$ and $\{V_n\}$ show that $U'=V$. Using
Lemma~\ref{anbndecay}, we obtain the 
formula
\begin{align} \label{URep}
U(y)&= \int_{p}^{q} K(y,s) (\im \mu)(s) ds.\notag \\
&= \int_{p}^{q} \f{1}{2}e^{-d(y,s)}(\im \mu)(s) ds
\end{align}
Next, it is easy to see that $U(y) \to 0$
as either $y \to p$ or $y \to q$. 
Mimicking the argument that showed that 
$V_n'(p_n) \to 0$, choose
$y^*$ close enough to $p$ so that 
$\max_{[p,y^*]}|\mu| < \f{\e}{2}$.  Then choose
$y$ even deeper into the cusp so that 
$d(y, y^*) \ge -\log (\f{\e}{\max_{\a_0}|\mu|})$
Then
\begin{align*}
|U(y)) &\le |\int_{p}^{y^*} K(y,s) (\im \mu)(s) ds| +|\int_{y^*}^{q}
 K(y,s) (\im \mu)(s) ds| \\
&<\f{\e}{2}\int_{p}^{y^*} |K(y,s)| ds +
 \max_{\a_0}|\mu|\int_{y^*}^{\infty} \f{1}{2} e^{-d(y,s)}ds\\
&\le \f{\e}{2}+ \f{1}{2} e^{-d(y,y^*)}\max_{\a_0}|\mu| \\
&\le \e
\end{align*}
as desired.

Finally, we address the finiteness of the energy of $U$.  Of course,
the metric $g_0$ near the cusp point $p$ (or $q$) may be expressed as 
$g_0 = |z|^{-2}(\log|z|)^{-2}|dz|^2$, and so the estimates for 
$\mu$ may be written as $|\mu(y)| = O(e^{y-e^y})$.  It is then an
easy estimate that
$$
\int_{\a_0} U'^2 + U^2 = \f{1}{2}\iint_{{\a_0}\times {\a_0}} e^{-|s-y|}\im
\mu(s) \im \mu(y) ds dy < \infty
$$ 
and is the limit of $\int_{\a_{0,n}} U'^2 + U^2$.

This concludes the proof of the Proposition.
\end{proof}

We summarize our discussion in this section with formulae for
the second variations of an open arc ${\a}$ analogous to those in 
Theorem~\ref{theorem:HessianFormula} for the second
variations of length of a simple closed curve $\g$.

\begin{thm} \label{theorem:ArcHessianFormula}
Along the Weil-Petersson geodesic arc
$\G(t)$, the second variation $\f{d^2}{dt^2}\ell$ of the $\G(t)$-length 
$\ell(t)=L(\G(t),{\a})$ of a (class of an) arc ${\a}$
is given by the (convergent) expression
\begin{align}\label{formula:Arcd2L}
\f{d^2}{dt^2}\ell(t) &= \int_{{\a}}- (\Delta - 2)^{-1}\f{2|\Phi|^2}{g^2_0} ds 
+ \int_{{\a}}
[U^{\Phi}_y]^2 +[U^{\Phi}]^2 ds\\
&=\int_{{\a}}- (\Delta - 2)^{-1}\f{2|\Phi|^2}{g^2_0} ds 
+ \f{1}{2}\iint_{{\a_0}\times {\a_0}} e^{-d(s,y)}\im
\mu(s) \im \mu(y) ds dy.
\end{align}

More generally, the Weil-Petersson Hessian $\text{Hess} L[\f{\bar\Phi}{g_0},
  \f{bar\Psi}{g_0}]$
is given by the (convergent) expression

\begin{align}\label{formula:ArcHessL}
\text{Hess} L[\f{\bar\Phi}{g_0},\f{\bar\Psi}{g_0}] &=
\int_{{\a}}- (\Delta - 2)^{-1}\f{2\re\Phi\bar\Psi}{g^2_0} ds + \int_{{\a}}
U^{\Phi}_yU^{\Psi}_y +U^{\Phi}U^{\Psi} ds\\
&=\int_{{\a}}- (\Delta - 2)^{-1}\f{2\re\Phi\bar\Psi}{g^2_0} ds\\
\notag 
&\hskip1cm +\f{1}{2}\iint_{{\a_0}\times {\a_0}}e^{-d(s,y)}\im
\mu(s) \im \nu(y) ds dy,
\end{align}
where $\mu=\f{\bar\Phi}{g_0}$ and $\nu=\f{\bar\Psi}{g_0}$ are the harmonic
Beltrami differential representatives of two tangent directions at 
$[g_0]$.
\end{thm}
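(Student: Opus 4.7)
The plan is to pass to the limit $n\to\infty$ in the identity
\eqref{lengthn}, which already expresses
$\f{d^2}{dt^2}L(g_t,{\a}_{t,n})$ as the sum of two positive integrals
over the finite approximating arc ${\a}_{0,n}$. Both summands have
been shown to converge: the first term converges by
Proposition~\ref{First Term Converges} (which relies on the
$O((\log\tfrac{1}{r})^{-\alpha})$ decay of $\ddot\SH$ in the cusp), and
the second term converges by Proposition~\ref{Vn limit}, which
produces a well-defined limiting variation field $V$ and primitive $U$
on ${\a}_0$, along with convergence of
$\int_{{\a}_{0,n}}U_{n,s}^{2}+U_{n}^{2}\to\int_{{\a}_{0}}U_{s}^{2}+U^{2}<\infty$.
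Taking limits on both sides of \eqref{lengthn} immediately yields the
first line of \eqref{formula:Arcd2L}.

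Next I would convert the second summand into the geometric kernel
form, following the same strategy used in
Section~\ref{subsection:kernel} for closed curves. Since $U''-U=\im\mu$
on ${\a}_0$ (the limiting version of \eqref{Unbdy}, whose boundary
data tend to zero by Lemma~\ref{anbndecay}), integration by parts
gives
\begin{equation*}
\int_{{\a}_0}U_s^{2}+U^{2}\,ds \;=\;-\int_{{\a}_0}U(U_{ss}-U)\,ds\;=\;-\int_{{\a}_0}U(s)\,\im\mu(s)\,ds.
\end{equation*}
Substituting the kernel representation $U(y)=-\tfrac{1}{2}\int_{{\a}_0}e^{-d(y,s)}\im\mu(s)\,ds$ arising from \eqref{URep} produces the double integral
$\tfrac{1}{2}\iint_{{\a}_0\times{\a}_0}e^{-d(s,y)}\im\mu(s)\im\mu(y)\,ds\,dy$,
which is exactly the second line of \eqref{formula:Arcd2L}. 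The boundary
terms from integrations by parts all vanish in the limit because, by
Lemma~\ref{lemma:mudecay} and Lemma~\ref{anbndecay}, both $U$ and
$V=U_s$ tend to zero at the cusps.

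The polarized Hessian formula \eqref{formula:ArcHessL} then follows by
standard polarization. The ODE \eqref{UPhiEquationb} is linear in the
inhomogeneous term, so $U^{\Phi+\Psi}=U^{\Phi}+U^{\Psi}$, and the
operator $-2(\Delta-2)^{-1}$ is linear as well. Applying
\eqref{formula:Arcd2L} to $\Phi+\Psi$, $\Phi-\Psi$, $\Phi+i\Psi$, and
$\Phi-i\Psi$ and taking the appropriate real linear combinations
isolates the bilinear form
$\int_{{\a}}-2(\Delta-2)^{-1}\f{\re\Phi\bar\Psi}{g_0^{2}}+\int_{{\a}}U^{\Phi}_{y}U^{\Psi}_{y}+U^{\Phi}U^{\Psi}$,
and the corresponding kernel double integral with $\im\mu$ and $\im\nu$.

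The main obstacle, as I see it, is the convergence statement in
Proposition~\ref{Vn limit} applied honestly to the polarized
expression: one must check that the mixed integrals
$\int_{{\a}_{0,n}}U_{n,s}^{\Phi}U_{n,s}^{\Psi}+U_n^{\Phi}U_n^{\Psi}$
converge. This follows from the diagonal convergence together with
Cauchy--Schwarz, since $|U^\Phi_y U^\Psi_y+U^\Phi U^\Psi|\le\tfrac{1}{2}((U^\Phi_y)^2+(U^\Psi_y)^2+(U^\Phi)^2+(U^\Psi)^2)$,
but one should verify this carefully uniformly in $n$, using the
decay estimates from Lemma~\ref{lemma:mudecay} for both $\mu$ and
$\nu$ so that the limiting double integral
$\tfrac{1}{2}\iint_{{\a}_0\times{\a}_0}e^{-d(s,y)}\im\mu(s)\im\nu(y)\,ds\,dy$
is absolutely convergent. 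Once this is settled the polarized formula
is immediate.
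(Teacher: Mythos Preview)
Your proposal is correct and follows essentially the same route as the paper: the theorem in the paper is stated as a summary of Section~6, and you have assembled exactly those ingredients (formula~\eqref{lengthn}, Proposition~\ref{First Term Converges}, Proposition~\ref{Vn limit}, the kernel limit $K(y,s)=-\tfrac12 e^{-d(y,s)}$, and polarization via linearity of $U^{\Phi}$) in the intended order. Your handling of the polarized convergence via Cauchy--Schwarz and the decay from Lemma~\ref{lemma:mudecay} is a small elaboration beyond what the paper spells out, but is routine and in the same spirit.
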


Here $U^{\Phi}$ satisfies the equation \eqref{UPhiEquation} along the
arc $\g$, a condition which forces $U^{\Phi} \to 0$ along $\g$ as it
tends
to the punctures $p$ and $q$.

\section{Convexity for Laminations}\label{lamination estimates} 
We have already seen in
Theorem~\ref{theorem:HessianFormula} and 
Corollary~\ref{HessianEstimate} that if $\g$ is a simple closed curve, 
then on a Weil-Petersson ray $\G=\G(t)$, we have
\begin{equation}\label{curve estimate}
\f{d^2}{dt^2}\ell_\g(\G(t))\ge   
\frac{1}{3}\int_\g \|\Phi_t\|^2 ds
\end{equation} 
where $\Phi(t)$ is the holomorphic \qd\ tangent to $\G$ at $\G(t)$,
and $\|\Phi\|= \f{|\Phi|}{g_0}$. 
In this section, we extend that result to prove the

\begin{prop} \label{lamination hessian} Let $\G=\G(t)$ 
be a Weil-Petersson ray and $\la$ a measured lamination on $S$. Then
\begin{equation}\label{lamination estimate}
\f{d^2}{dt^2}\ell_\la(\G(t))\ge
\frac{1}{3}\int_\la \|\Phi_t\|^2 ds
\end{equation}
where $\Phi_t$ is the holomorphic \qd\ tangent to $\G$ at $\G(t)$.
\end{prop}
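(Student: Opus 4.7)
The proof will be by approximation of $\la$ by weighted simple closed multicurves. The bulk of the argument is showing that each of the two pieces of the Hessian formula of Theorem~\ref{theorem:HessianFormula} extends continuously to measured laminations, so that the inequality of Corollary~\ref{HessianEstimate} passes to the limit. The hard part is continuity of the Hessian; the estimate itself is automatic once continuity is in hand.

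First, I will approximate $\la$ in the weak topology on measured laminations by a sequence $\la_n = \sum_i c_{n,i}\g_{n,i}$ of rational weighted simple closed multicurves. For each $\la_n$, the length $\ell_{\la_n}$ is linear in the weights, so Corollary~\ref{HessianEstimate} applied component by component gives
\begin{equation*}
\f{d^2}{dt^2}\ell_{\la_n}(\G(t)) \;\ge\; \f{1}{3}\int_{\la_n}\|\Phi_t\|^2\,ds,
\end{equation*}
where $\int_{\la_n}(\cdot)\,ds$ means the sum $\sum_i c_{n,i}\int_{\g_{n,i}}(\cdot)\,ds$. Since $\|\Phi_t\|^2$ is a continuous function on $S$, integration against measured laminations is weakly continuous in $\la$, and the right-hand side converges to $\tfrac{1}{3}\int_{\la}\|\Phi_t\|^2\,ds$.

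The main obstacle is showing $\tfrac{d^2}{dt^2}\ell_{\la_n}(\G(t)) \to \tfrac{d^2}{dt^2}\ell_{\la}(\G(t))$. The natural way is to exhibit a formula for the left-hand side that is manifestly a continuous functional of $\la$. Using the kernel representation from subsection~\ref{subsection:kernel} (or, equivalently, the arc formula of Theorem~\ref{theorem:ArcHessianFormula} applied leaf-by-leaf), I will write
\begin{equation*}
\f{d^2}{dt^2}\ell_{\la}(\G(t)) \;=\; \int_{\la}\!-2(\Delta-2)^{-1}\|\Phi_t\|^2\,ds \;+\; \f{1}{2}\iint_{\la\times\la}e^{-d(p,q)}\im\mu(p)\im\mu(q)\,ds(p)\,ds(q),
\end{equation*}
where the second integrand is to be integrated against the product transverse measure restricted to pairs on a common or distinct leaf. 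The integrand $-2(\Delta-2)^{-1}\|\Phi_t\|^2$ is a smooth surface function, so the first term is continuous in $\la$. The second term has a bounded continuous kernel on $S\times S$ and Borel integrands supported on the lamination, so it too is continuous in $\la$ as $\la_n\to\la$ weakly (using that the weighted multi-curves $\la_n$ can be chosen so that the product measures $\la_n\times\la_n$ converge weakly to $\la\times\la$ on $S\times S$). Since this second term is non-negative, the inequality \eqref{lamination estimate} follows by dropping it and passing to the limit from the $\la_n$ estimate.

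The technical points that will need care are: (i) verifying the kernel representation on individual leaves of $\la$, noting that leaves are either closed (length $\ell$) or biinfinite and dense in the lamination, so that on biinfinite leaves one uses the arc kernel $-\tfrac{1}{2}e^{-d(p,q)}$ from Section~6 rather than the closed-leaf kernel, together with appropriate decay of $\im\mu$ along the leaf; (ii) confirming that the Hessian formula as written is the honest second derivative of $\ell_\la\circ\G$, which follows from known $C^2$-regularity of $\ell_\la$ on \Tec space and the fact that the formula agrees with the analytic Hessian on the dense set of weighted multicurves. Once these are in place, the main inequality follows by keeping only the first summand and applying Lemma~\ref{subsolution} pointwise on $S$, which gives the estimate \eqref{lamination estimate} directly without even needing to pass to the limit in the energy term.
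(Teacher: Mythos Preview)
Your overall strategy—approximate $\la$ by weighted simple closed curves, apply Corollary~\ref{HessianEstimate} to the approximants, and pass to the limit—is exactly the paper's. The right-hand side is handled identically. The difference is in how you treat convergence of the left-hand side.

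The paper does this in one line: it invokes Kerckhoff's theorem that $\ell_{\g_n}$ and $\ell_\la$ are real analytic along $\G$, so $\ell_{\g_n}\to\ell_\la$ forces $\tfrac{d^2}{dt^2}\ell_{\g_n}\to\tfrac{d^2}{dt^2}\ell_\la$. That is the whole argument.

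Your route through an explicit kernel formula for the Hessian on laminations is both unnecessary and, as written, has a genuine gap. The second term in \eqref{formula:d2L} is \emph{not} an integral over $S\times S$ with a continuous kernel $e^{-d_S(p,q)}$; the distance in the kernel is arclength \emph{along the curve} (or leaf), and for a closed curve of length $L$ the kernel is $\cosh(d(p,q)-L/2)/(2\sinh(L/2))$, not $\tfrac12 e^{-d(p,q)}$. So the ``product measure $\la_n\times\la_n$ on $S\times S$ converges weakly'' argument does not apply: you would need convergence of the \emph{same-leaf} pairing with the along-leaf kernel, and that pairing is not a continuous functional of the transverse measure in any obvious sense (leaves of $\la_n$ are closed while generic leaves of $\la$ are biinfinite, so the kernels themselves are changing). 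You yourself flag this in point~(i), but the resolution you sketch is not enough.

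You do, in point~(ii), invoke ``known $C^2$-regularity of $\ell_\la$ on \Tec space.'' That is precisely the input the paper uses (in its stronger real-analytic form), and once you have it, the entire kernel discussion is superfluous: convergence of the second derivatives follows directly, and you can pass to the limit in the scalar inequality $\tfrac{d^2}{dt^2}\ell_{\la_n}\ge\tfrac13\int_{\la_n}\|\Phi_t\|^2\,ds$ without ever writing a formula for the Hessian on $\la$. So your argument can be repaired, but the repair amounts to deleting most of it and keeping only the paper's approach.
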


Our definition of $\int_\la\|\Phi\|^2ds$ is straightforward and
parallels the definition of length of the lamination $\la$. See
\cite{bonahon:contempmath2001} for a background discussion. In particular, a
measured lamination $\la$ is defined as a measure 
$\la(k)= \int_k d\lambda$ on
transverse arcs $k$. Choose arcs $k_1,\dots,k_J$ which are transverse
to $\la$ and construct flow boxes $\{F_i\}$ for $\la$ 
bounded by the $k_j$ and parallel to $\la$.

Then if $G_\la$ is the geodesic lamination underlying $\la$, then
$\la-\cup_jk_j$ is a (possibly infinite) 
collection of finite length arcs. The length of
$\la$ is then the integral, with respect to the transverse measure
$d\la$ of $\la$, of the lengths of the
finite arc components. More precisely, we lift each of these
components $\la$ to $\hat\la_a\subset T^1M$, endow $\hat\la_a$ with
the natural arclength measure $ds$, and then 
integrate the product to get
$$
\ell(\la) =: \int_\la ds =: \iint_{\la_a} ds d\la(a).
$$
In order to define $\int_\la\|\Phi\|^2ds$, we proceed analogously,
except that we note that the function $\|\Phi\|^2$ on $S$ then
naturally defines 
a measure $\|\Phi\|^2ds$ on $T^1M$. In other words, we set
$$
\int_\la \|\Phi\|^2ds = \iint_{\la_a} \|\Phi\|^2ds d\la(a).
$$

\begin{proof}[Proof of Proposition~\ref{lamination hessian}] 
Let $\g_n$ be a sequence of simple closed curves
  converging to $\la$. The idea is to apply \eqref{curve estimate} to
  $\g_n$ and 
then take a limit in $n$ to find \eqref{lamination estimate}.

Now $\ell_\g$ and $\ell_\la$ are real analytic functions on $\G$ 
\cite{kerckhoff:earthquakesanalytic85},
and thus since $\ell_{\g_n}\to\ell_\la$, so does
$\f{d^2}{dt^2}\ell_{\g_n}\to\f{d^2}{dt^2}\ell_\la$. Thus the left-hand
sides of \eqref{curve estimate} 
converge to the left-hand side of  \eqref{lamination estimate}.

For the right-hand side, the argument is virtually tautological. We
first note that the arclength measure $\|\Phi\|^2ds$ is continuous on
$T^1M$. Then, choose $n$ sufficiently large so that the flow boxes
$\{F_i\}$ described above also serve as flow boxes for $\g_n$. Then
the definition of 
convergence in $\SM\SL$ then easily 
implies that the right-hand side of  \eqref{curve estimate} 
converges to the right-hand side of  \eqref{lamination estimate}.
\end{proof}
 
\section{Convexity of $\ell^{\f12}$ and upper bounds on the
  Hessian}\label{sec:Two-thirds convex}

\subsection{The function $\ell^{\f12}$.}\label{sec:One-half convex}

\subsubsection{The function $\ell^{\f23}$.}
In light of \eqref{curve estimate}, we can quickly refine the
basic convexity result for the length $\ell$ to a convexity result 
for a concave function of $\ell$, namely $\ell^{\f23}$. We will see in
the next subsection that this is not sharp, but at this stage it is 
elementary.

To begin, recall from \eqref{FirstVariation} that the first 
variation of length may be expressed as

\begin{equation*}
\f{d}{dt} \ell_{\g}(\G(t)) =\int_{\g_0} \f{\ree \Phi}{g_0} ds
\end{equation*}

We can then estimate this derivative as 
\begin{align*} 
\bigl|\f{d}{dt} \ell_{\g}(\G(t))\bigr| &\le \int_{\g_0} 
\bigl|\f{\ree \Phi}{g_0}\bigr| ds\\
&\le\big(\int_{\g_0}\f{|\Phi|^2}{g_0^2}ds\big)^{\f12}\ell_{\g_0}^{\f12}
\end{align*}
Squaring and combining with \eqref{curve estimate} yields
\begin{equation}
\ell\f{d^2}{dt^2}\ell_{\g}(\G(t)) \ge \f13\big(\f{d}{dt}
\ell_{\g}(\G(t))\big)^2.
\end{equation}
We compute $\f{d^2}{dt^2} \ell^{\f23}(\G_t)$ and substitute in the
above inequality to conclude the 
\begin{cor} \label{corollary:two-thirds convex}
The function $\ell^{\f23}$ is Weil-Petersson convex on \Tec space.
\end{cor}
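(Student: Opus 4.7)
The plan is to finish the computation that the preceding paragraph has set up. All the real work has already been done: combining the first variation formula \eqref{FirstVariation} with Cauchy-Schwarz gave $|\ell'|^2 \le \ell \int_{\gamma_0}\|\Phi\|^2 ds$, and the convexity estimate \eqref{curve estimate} then yielded
\[
\ell \cdot \ell'' \ \ge \ \tfrac{1}{3}(\ell')^2
\]
along any Weil-Petersson geodesic $\G(t)$. What remains is to translate this differential inequality into convexity of $\ell^{2/3}$, which is purely a one-variable calculus exercise.

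Concretely, I would compute
\[
\frac{d^2}{dt^2}\ell^{2/3} \ = \ \tfrac{2}{3}\ell^{-1/3}\ell'' - \tfrac{2}{9}\ell^{-4/3}(\ell')^2 \ = \ \tfrac{2}{3}\ell^{-4/3}\!\left[\,\ell\,\ell'' - \tfrac{1}{3}(\ell')^2\,\right],
\]
and invoke the displayed inequality to see that the bracket is non-negative. Since $\ell > 0$ everywhere on $\ST(S)$, the expression is non-negative along every Weil-Petersson geodesic, which by definition is convexity.

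There is no real obstacle here; the only thing worth noting is why the exponent $\tfrac{2}{3}$ is forced by the method. For general $\alpha > 0$ one has
\[
\frac{d^2}{dt^2}\ell^{\alpha} \ = \ \alpha\,\ell^{\alpha-2}\!\left[\,\ell\,\ell'' - (1-\alpha)(\ell')^2\,\right],
\]
and non-negativity from our inequality $\ell\ell'' \ge \tfrac{1}{3}(\ell')^2$ requires $1-\alpha \le \tfrac{1}{3}$, i.e. $\alpha \ge \tfrac{2}{3}$. Thus $\tfrac{2}{3}$ is the smallest (hence sharpest, since smaller powers are more concave) exponent this method produces, matching the statement of the corollary. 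The genuinely harder refinement to $\ell^{1/2}$ is left to the subsequent subsection, where a geometric comparison argument is promised.
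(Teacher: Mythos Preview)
Your proof is correct and follows exactly the paper's approach: the paper derives the inequality $\ell\,\ell'' \ge \tfrac{1}{3}(\ell')^2$ via Cauchy--Schwarz and Corollary~\ref{HessianEstimate}, then simply says ``We compute $\tfrac{d^2}{dt^2}\ell^{2/3}(\G_t)$ and substitute'' --- you have spelled out that computation. Your added remark explaining why $\tfrac{2}{3}$ is the optimal exponent obtainable from this method is a nice touch not present in the paper.
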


\subsubsection{The length of an annulus.}\label{annulus}

In this subsection, we compute in two ways the second variation of the 
length of the core geodesic in a hyperbolic annulus; the basic result
is well-known (see Example~3.6 in \cite{wolpert:behavior08}).

let $\SC(\ell)$ denote a complete hyperbolic annulus whose core
geodesic has length $\ell$.
This cylinder may be parametrized as
$$\SC(\ell) = [-\f{\pi}{2\ell},\f{\pi}{2\ell}] \times [0,1],$$
where top and bottom edges are identified, and we consider the 
metric cylinder as equipped with the hyperbolic metric 
$g=ds_{\ell}^2 = \ell^2\csc^2 \ell x |dz|^2$. 

Consider rotationally the harmonic map
$R(t): \SC(\ell)\to \SC(\ell+t)$ which does not twist the boundary;
in other words, this map may be expressed in coordinates
as $R(t)= u(t) + i v(t)$ where $u(t)(z) = u(t)(x)$ and
$v(t)(z) = y$. Now the rotationally invariant 
holomorphic quadratic differentials
on a cylinder have a particularly simple form: we may write
one as $\Phi = c dz^2$ in the complex coordinates above.
Thus, taking one of these as a Hopf differential for our map,
and using that the holomorphic energy $\SH$, the Beltrami
differential $\nu$ and the Hopf differential $\Phi$ may be related 
by $\Phi = g(\ell_0)\SH\bar\nu$, we conclude that
\begin{equation*}
u' = \f{1+\nu}{1-\nu}=\f{\ell^2 + c \cos^2\ell x}{\ell^2 - c
  \cos^2\ell x}.
\end{equation*}

If $c(t)$ is the factor so that the Hopf differential 
is parametrizing a family of harmonic maps 
$R(t): \SC(\ell) \to \SC(\ell +t)$ whose targets are 
progressing through \tec space $\ST(\SC)$ at unit
\WP speed, then two conditions hold: (i) the choice of $c(t)$
provides for $c(t)dz^2$ to the Hopf differential for the
map $R(t)$, and (ii) $\|\f{d}{dt}\Phi(t)\|_{WP} = 1$.

Now, for $w(t)$ to have image $\SC(\ell +t)$, we must have the
boundary of $\SC(\ell)$ map to the boundary of $\SC(\ell +t)$, i.e.
\begin{equation*}
\f{\pi}{2(\ell + t)}= u(\f{\pi}{2\ell}) = \int_0^{\f{\pi}{2\ell}}
u'(x)dx +u(0) = \int_0^{\f{\pi}{2\ell}}\f{1+\nu}{1-\nu}=\f{\ell^2 
+ c \cos^2\ell x}{\ell^2 - c
  \cos^2\ell x}dx.
\end{equation*}
Upon differentiating in in $t$ and finding the resulting elementary
integrals, we obtain $\dot c = -\ell$.  Thus
\begin{equation*}
\|\f{\partial}{\partial \ell}\|^2_{WP} = \|\dot c
g_{\ell}^{-1}\|_{WP}= \|(-\ell) \ell^{-2}\cos^2\ell
x\|_{WP}=\f{\pi^2}{\ell}
\end{equation*}
after another explicit integration. Thus 
$\|\f{\partial}{\partial \ell}\|_{WP}= \f{\pi}{\ell^{\f12}}$
and so $ds^2_{WP} =\f{\pi^2}{\ell}d\ell^2$ on the 
\tec space $\ST(\SC)$.  This implies that on this space,
$\f{d\ell}{ds}= \pi^{-1}\ell^{\f12}$ and so 
$\ell = (2\pi)^{-2}s^2$.  Thus the length $\ell$ of the core geodesic
satisfies that $\ell^{\f12}$ is convex, but not convex to any
lower power.

\begin{remark} Alternatively, we may use the formulas
  \eqref{formula:d2L} and \eqref{FirstVariation}
to analytically find the same result.  In that case, if we set
$\Phi=c dz^2$, then 
$\ddot\ell = \int_{\g_0} -2(\Delta-2)^{-1}\{c^2 g^{-4}\}$. However,
the equation $\Delta u-2u = c^2 g^{-4}$ reduces in this case to an 
ordinary differential equation, whose solution we can require to be 
bounded on the boundary. It is then elementary (see the analogous
analysis in \eqref{BVP}ff using the method of variations of 
parameters) to find an exact expression for $u$ on $\g_0$.  
We then compare with the expression in \eqref{FirstVariation}
for the first derivative to to obtain the half-power
convexity result above: this confirms, at least in this
very simple case, the formula \eqref{formula:d2L}.
\end{remark}

\subsubsection{$\ell^{\f12}$ is convex.}
We now offer a geometric argument of Wolpert's recent result
\cite{wolpert:behavior08} that $\ell_{\g}^{\f12}$ is 
\WP convex. 

\noindent\emph{Comparison of lifted harmonic map to rotationally invariant
  map.} The essential point is best understood in the setting of the 
annular covers $(\SC, \tilde{g_t})$ of the family of surfaces
$(S, g_t)$. Consider the harmonic maps $w_t: (S, g_0) \to (S, g_t)$
and their lifts $\tilde{w_t}: (S, \tilde{g}_0) \to (S, \tilde{g}_t)$.
These lifts are in the homotopy class of the rotationally invariant
harmonic map $R_t: \SC(\ell_0) \to \SC(\ell_t)$, where
$\ell_{\g}(g_t) = \ell_t$.  Now $w_t$ is conformal only at the 
(isolated) zeroes of the Hopf differential, and so, off of small
neighborhoods of the zeroes of the lifted Hopf differential,
the harmonic map $\tilde{w_t}$ has quasi-isometric constant
uniformly bounded away from $1$. By contrast, one can
either compute or reason geometrically that the rotationally
invariant harmonic map $R_t: \SC(\ell_0) \to \SC(\ell_t)$
has quasi-isometric constant tending uniformly to $1$ as 
one leaves compacta in $\SC(\ell_0)$:  the image curves are growing
exponentially in length, so energy efficiency requires the map to be 
increasingly close to an isometry as one leaves compact sets.

 Let $\SH^R(t)$ be the holomorphic energy (see
equations \eqref{pullback}-\eqref{Hdotdot}) of $R(t)$
and $\SH(t)$ be the holomorphic energy of $\tilde{w_t}$. 
Since both $R(t)$ and $\tilde{w_t}$ are the identity when
$t=0$, and since, as we have just seen, $R(t)$ is asymptotically
an isometry while $\tilde{w_t}$ is boundedly away from being the 
identity off small sets, then we find
\begin{equation*}
\ddot{\SH}(t) \ge \ddot{\SH}^R(t)
\end{equation*}
outside some large compact set (at least away from small neighborhoods of
the zeroes of the lift $\tilde{\Phi}$ of $\Phi$). In particular,
parametrizing $\SC(\ell)$ as in subsection~\ref{annulus}, we see that
\begin{equation}\label{ODEbdy}
\int_{x=\pm\f{\pi}{2\ell}\mp\d}\ddot{\SH}(t) \ge 
\int_{x=\pm\f{\pi}{2\ell}\mp\d}\ddot{\SH}^R(t).
\end{equation}

\noindent\emph{A comparison of ODEs.} The rest of the proof follows 
by applying other inequalities that reflectthat $R(t)$
is a harmonic map of lower (regularized in some way) energy that
$\tilde{w_t}$. In particular, consider the Fourier expansion
$\Phi = \sum b_n(x)e^{2\pi iny}$ of the quadratic differential
$\Phi$ (where the map $\tilde{w_t}$ has Hopf differential $\Phi$).

Because 
\begin{equation*}
\dot{\ell}= \int_{x=0}\f{\ree \Phi}{g}\sqrt{g}dy = \ell \ree b_0
\end{equation*}
we know that the Hopf differential $\Phi^R$ for the 
rotationally invariant map $R(t)$ must be
$\Phi^R = \ree b_0 dz^2$ on $\SC(\ell)$: this is because
the targets $\SC(\ell + t)$ agree for the two maps $w_t$ and $R(t)$,
and hence the change in core-curve length is the same.

The upshot is that, for an arbitrary constant curvature
circle $\{x=\xi\}$, we have
\begin{equation} \label{ODERHS}
\int_{x=\xi} \f{|\Phi|^2}{g^2}ds = g^{-\f32}(x)\sum|b_n|^2 
\ge g^{-\f32}(x)(\ree b_0)^2 = \int_{x=\xi} \f{|\Phi^R|^2}{g^2}ds.
\end{equation}

Of course, we know from formula \eqref{formula:d2L} that
\begin{equation*}
\ddot{\ell} \ge \int_{\g_0} -2(\Delta-2)^{-1}\f{|\Phi|^2}{g^2}ds 
=\f12 \int_{\g_0}\ddot{\SH}ds
\end{equation*}
 
To estimate this last integral, let $u$ be the solution of 
\begin{equation*}
\Delta_g u-2u = \f{-2|\Phi|^2}{g^2}.
\end{equation*}
If we were to integrate this equation along the vertical parameter
curves $\{x=const\}$, we would obtain an ordinary 
differential equation for the function 
$\int_x u dy = \int_x \f12\ddot{\SH} dy$ in the single variable
$x \in (-\f{\pi}{2\ell},\f{\pi}{2\ell})$, i.e.
\begin{equation*} \label{ODE-Hdotdot}
\f1g\partial_x^2 \int_x u dy - 2\int_x u dy = -2\int_x
\f{|\Phi|^2}{g^2}dy
\end{equation*}
Of course, a similar equation holds for the integrals 
$\int_x \f12\ddot{\SH}^R dy$ and $\int_x \f{|\Phi^R|^2}{g^2}dy$
associated to the rotationally invariant map. Indeed,
inequality \eqref{ODEbdy} says that there are boundary
points $x=\pm\f{\pi}{2\ell}\mp\d$ at which 
$\int_x \f12\ddot{\SH}^R dy \le \int_x \f12\ddot{\SH} dy$; 
moreover, inequality \eqref{ODERHS} asserts that on the interval
$(-\f{\pi}{2\ell},\f{\pi}{2\ell})$, the right-hand-side
of \eqref{ODE-Hdotdot} is less in the lifted case than
in the rotationally invariant case.

The upshot is that the comparison principle for ordinary differential
equations implies that 
\begin{align*}
\int_x -2(\Delta-2)^{-1}\f{|\Phi|^2}{g^2}dy 
&= \int_x \f12\ddot{\SH}^Rdy \\
&\ge \int_x \f12\ddot{\SH} dy \\
&= \int_x -2(\Delta-2)^{-1}\f{(\ree b_0)^2}{g^2}dy.
\end{align*}
In particular, specializing to the curve $\{x=0\}$, and recalling the
implication above of \eqref{formula:d2L}, we find
\begin{equation*}
\ddot{\ell} 
\ge \int_{\g_0} -2(\Delta-2)^{-1}\f{|\Phi|^2}{g^2}
\ge \int_{\g_0} -2(\Delta-2)^{-1}\f{(\ree b_0)^2}{g^2}
= \ddot{\ell^R}
\ge 2\f{\dot{\ell}^2}{\ell}.
\end{equation*}
Here the last inequality is inherited from the rotationally
invariant case: recall that our choice that 
$\ree \int_{\g_0} \Phi = \int_{\g_0} \Phi^R$ implies that
the infinitesimal change of lengths agree between the 
lifted and rotationally invariant maps.  We conclude
\begin{cor}
(Wolpert \cite{wolpert:behavior08}) The function $\ell^{\f12}$
is \WP convex in the \tec space $\ST(S)$.
\end{cor}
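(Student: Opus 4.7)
The plan is to establish the differential inequality $\ddot\ell \geq \f{1}{2}\dot\ell^2/\ell$ along every \WP geodesic; a direct computation $\f{d^2}{dt^2}\ell^{\f{1}{2}}=\f{1}{2}\ell^{-\f{1}{2}}\bigl(\ddot\ell-\f{1}{2}\dot\ell^2/\ell\bigr)$ then yields convexity of $\ell^{\f{1}{2}}$. The easy route --- Cauchy--Schwarz applied to the first variation formula \eqref{FirstVariation} together with the pointwise lower bound of Corollary~\ref{HessianEstimate} --- only produces $\ddot\ell\geq\f{1}{3}\dot\ell^2/\ell$, hence only convexity of $\ell^{\f{2}{3}}$ (this is Corollary~\ref{corollary:two-thirds convex}); a finer mechanism is needed to extract a sharper constant from Theorem~\ref{theorem:HessianFormula}.

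The strategy is to pass to the annular cover $\SC(\ell_0)\to S$ of the core geodesic $\g_0$ and compare the lift $\widetilde w_t$ of the identity harmonic map $(S,g_0)\to(S,g_t)$ with the rotationally invariant, non-twisting harmonic map $R_t\colon \SC(\ell_0)\to\SC(\ell_t)$ between the corresponding annular covers. Expanding the Hopf differential of $\widetilde w_t$ in Fourier modes along $\g_0$ as $\widetilde\Phi=\sum_n b_n(x)e^{2\pi iny}\,dz^2$, the formula \eqref{FirstVariation} forces the Hopf differential $\Phi^R$ of $R_t$ to equal $\re b_0\,dz^2$, so $\dot\ell$ is the same for both families. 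Parseval then gives the pointwise inequality $\int_{\{x=c\}}|\widetilde\Phi|^2/g^2\,dy \geq \int_{\{x=c\}}|\Phi^R|^2/g^2\,dy$, while the corresponding holomorphic energies $\ddot{\SH}$ and $\ddot{\SH}^R$ satisfy inhomogeneous Helmholtz equations from \eqref{Hdotdot} with these two sides as their right-hand sides.

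Averaging both PDEs in the $y$ direction reduces them to one-dimensional ODEs in $x$, and the ODE comparison principle, combined with an asymptotic boundary argument --- namely that $R_t$ is asymptotically isometric on the infinite annulus (since its image parallels grow exponentially, forcing energy efficiency) whereas $\widetilde w_t$ remains uniformly quasi-conformal away from the isolated zeroes of $\widetilde\Phi$ --- gives $\ddot{\SH}\geq\ddot{\SH}^R$ on the core curve. Combined with \eqref{formula:d2L} of Theorem~\ref{theorem:HessianFormula}, this yields $\ddot\ell\geq\ddot{\ell^R}$, and the explicit rotationally invariant calculation of Section~\ref{annulus}, which shows $\ell=(2\pi)^{-2}s^2$ in \WP arclength and hence $\ddot{\ell^R}=\f{1}{2}\dot\ell^2/\ell$, closes the argument. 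I expect the delicate step to be precisely the boundary comparison: one must verify both that the neighborhoods of the zeroes of the lifted Hopf differential can be enclosed inside a compact subannulus and that the exponential growth estimate makes the inequality $\ddot{\SH}\geq\ddot{\SH}^R$ robust on the two boundary circles of such a subannulus, uniformly in the truncation.
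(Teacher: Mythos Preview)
Your proposal is correct and follows essentially the same route as the paper: lift to the annular cover, compare the lifted harmonic map with the rotationally invariant one via Fourier expansion and Parseval, average the $(\Delta-2)$ equations over the circular fibers to reduce to an ODE comparison with the asymptotic-isometry boundary condition, and then invoke the explicit annulus computation of Section~\ref{annulus} for the sharp constant. Your identification of the delicate step --- controlling the boundary comparison away from the zeroes of the lifted Hopf differential --- matches exactly where the paper's argument concentrates its effort.
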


\subsection{A general upper bound for the Hessian}

We have already seen a lower bound for the Hessian of length
in Corollary~\ref{HessianEstimate} and 
Proposition~\ref{lamination hessian}. In this passage, we note an 
easy upper bound as well. 

We begin by noting that if 
\begin{equation*}
(\Delta-2)h = -2\f{|\Phi|^2}{g_0^2}
\end{equation*}
on a surface $S$, then the maximum principle implies 
\begin{equation}\label{maxprinciplebound}
h \le \| \f{|\Phi|^2}{g_0^2} \|_{\infty}
\end{equation}
where the right hand side is the maximum of the function 
$\f{|\Phi|^2}{g_0^2}$ on $S$. In the formula~\eqref{formula:d2L},
this will estimate the first term.

To estimate the second term, we consider equation~\eqref{primitive}
(combined with \eqref{FDefined})
\begin{equation}
U_{yy} -U = -\f{\im\Phi}{g_0}.
\end{equation}
The maximum principle then implies that 
\begin{equation}
U \le \max_{\g}\big|\f{\im\Phi}{g_0}\big|.
\end{equation}
Thus the second term in formula~\eqref{formula:d2L}
is estimated as 
\begin{align}\label{EnergyBoundByMax}
\int_{\g} U^2_y + U^2 &= - \int_{\g} (U_{yy} - U)U\\ \notag
&= \int_{\g} (\f{\im\Phi}{g_0})U\\ \notag
&\le \int_{\g}\big(\max_{\g}\big|\f{\im\Phi}{g_0}\big|\big)\big(\max_{\g}\big|\f{\im\Phi}{g_0}\big|\big)
\quad\text{after a substitution }\\ \notag
&= \ell_{\g}(\max_{\g}\big|\f{\im\Phi}{g_0}\big|)^2.
\end{align}
We conclude, taking into account Corollary~\ref{HessianEstimate}, that
\begin{cor}
\begin{equation}
\frac{1}{3}\int_\g \|\Phi_t\|^2 ds \le \f{d^2}{dt^2}\ell_\g(\G(t))\le 
\ell_{\g}(\max_S  \| \f{|\Phi|^2}{g_0^2} \| +
(\max_{\g}\big|\f{\im\Phi}{g_0}\big|)^2).
\end{equation}
\end{cor}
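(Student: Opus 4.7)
The proof naturally splits into the two inequalities, with the left-hand inequality being immediate and the right-hand inequality requiring a short but standard argument.

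For the lower bound, I would simply invoke Corollary~\ref{HessianEstimate}, which was already derived via Lemma~\ref{subsolution} as a pointwise subsolution bound on $-2(\Delta-2)^{-1}(|\Phi|^2/g_0^2)$. Since the second summand in formula~\eqref{formula:d2L} is manifestly non-negative (being an $L^2$ integral of $U_y^2 + U^2$), the Hessian is at least as large as the first summand, which in turn dominates $\frac{1}{3}\int_\g \|\Phi_t\|^2 ds$. No additional work is needed.

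For the upper bound, the plan is to control each of the two summands in \eqref{formula:d2L} separately, using the maximum principle in two different settings. First I would bound the integrand of the first summand: let $h = -2(\Delta-2)^{-1}(|\Phi|^2/g_0^2)$, so that $(\Delta-2)h = -2|\Phi|^2/g_0^2$ on the closed surface $(S,g_0)$. At a maximum of $h$, we have $\Delta h \le 0$, which forces $2h \le 2|\Phi|^2/g_0^2$ at that point, giving the pointwise bound $h \le \|\,|\Phi|^2/g_0^2\,\|_\infty$ as recorded in \eqref{maxprinciplebound}. Integrating along $\g_0$ contributes a factor of $\ell_\g$.

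For the second summand, I would use the already-computed identity displayed in \eqref{EnergyBoundByMax}: integration by parts together with $U_{yy}-U = -\im\Phi/g_0$ yields
\begin{equation*}
\int_{\g_0} U_y^2 + U^2\, ds = \int_{\g_0} \frac{\im\Phi}{g_0}\, U\, ds.
\end{equation*}
Applying the one-dimensional maximum principle to $U_{yy}-U = -\im\Phi/g_0$ on the closed geodesic gives $|U| \le \max_{\g_0}|\im\Phi/g_0|$, and the bound on the integral then follows by pulling both factors out in sup norm and picking up the length $\ell_\g$. Adding the two contributions yields the claimed upper bound.

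There is no real obstacle here — both estimates are applications of the maximum principle, once to an elliptic PDE on the compact surface and once to an ODE on the closed geodesic $\g_0$. The only point meriting care is that the ODE maximum principle for $U_{yy}-U = -\im\Phi/g_0$ does apply on a closed loop: the operator $\frac{d^2}{dy^2}-1$ satisfies the maximum principle for periodic boundary data because the coefficient of the zeroth-order term is negative, so if $U$ attains its maximum at some $y_0 \in \g_0$ we have $U_{yy}(y_0)\le 0$ and hence $U(y_0) \le \im\Phi(y_0)/g_0(y_0) \le \max_{\g_0}|\im\Phi/g_0|$, with the symmetric bound from below for $-U$.
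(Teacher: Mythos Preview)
Your proof is correct and follows essentially the same argument as the paper: the lower bound is quoted from Corollary~\ref{HessianEstimate}, and the upper bound is obtained by applying the maximum principle separately to the elliptic equation for $h=-2(\Delta-2)^{-1}(|\Phi|^2/g_0^2)$ on $S$ and to the ODE $U_{yy}-U=-\im\Phi/g_0$ on the closed geodesic, exactly as in the derivations of \eqref{maxprinciplebound} and \eqref{EnergyBoundByMax}. Your added remark justifying the ODE maximum principle on the loop is a welcome clarification that the paper leaves implicit.
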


\subsection{Estimates for the Weil-Petersson connection near the 
compactification divisor}

In this passage we refine the method above to estimate the
Weil-Petersson connection on a codimension two distribution
$\SP \subset T\SM$ of the tangent bundle near the Deligne-Mumford
compactification 
divisor (i.e. for
surfaces with small injectivity radius) which is in some sense 
``parallel'' to the tangent bundle of the compactification 
divisor.  Roughly, we prove that $\SP$ is quite flat, in the sense
that for $X,Y \in \SP$, we will have that the normal component
$(\nabla_X Y)^{\bot}$ of $(\nabla_X Y)$ satisfies 
$(\nabla_X Y)^{\bot} = O(\ell^2)$.  (Here $\ell$ signifies the length
of the curve which vanishes on the nearby component of the
compactification
divisor.

To state this precisely, we choose a simple closed curve $\g \subset
S$ and a small number $\ell >0$; we consider the level set
$L_{\g}(\ell)$ of hyperbolic surfaces for which $L(\cdot, \g)=\ell$.
The set $L_{\g}(\ell)$ is a submanifold of the \Tec space $\ST$ of 
real codimension one, and it is orthogonal to the vector 
$\grad \ell_{\g}$, the \WP gradient of $\ell_{\g}$. Let $J$ be the
almost complex structure of $\ST$ and consider the projection 
$\tau \in TL_{\g}(\ell)$ of $J \grad \ell_{\g}$ into 
$ TL_{\g}(\ell)$.  Let $\SP \subset TL_{\g}(\ell)$ denote the
distribution of $(\dim \ST -2)$-planes in $TL_{\g}(\ell)$
orthogonal to the span of $\grad \ell_{\g}$ and $\tau$;
note that $\SP$ is \WP orthogonal to both  $\grad \ell_{\g}$ and
$J\grad \ell_{\g}$.

\begin{remark}
One does not expect $\SP$ to be integrable. In particular,
one does not expect that $J\grad \ell_{\g}$ is parallel to the 
Fenchel-Nielsen twist vector field. (See \cite{wolpert:FNtwist82}.)
Nevertheless, 
it will be a consequence of equation \eqref{formula:NoPeriod} of the
early part of the next proof that, as $\ell_{\g} \to 0$, the 
distribution $\SP$ converges to the tangent bundle $T\SC_{\g}$ of the
compactification divisor $\SC_{\g} = \{\ell_{\g}=0\}$ of the 
augmented \Tec space $\bar{\ST}$. (Compare \cite{Wo91a}.)
\end{remark}

Now, for $X \in \SP$ and $Y$ a section of $\SP \to \ST$, (i.e. a 
vector field on $L_{\g}(\ell)$), we can consider the \WP 
covariant derivative $\nabla_X Y$.  Of course, the vector
 $\nabla_X Y$ has components both in $\SP$ and in the orthogonal
complement $\SP^\bot$ of $\SP$; we focus here on the component of
this vector in the 
orthogonal complement.

\begin{remark}
It might be difficult to formulate general results on  the full 
vector $\nabla_X Y$ in useful and incisive way.  For example, if we were
to ``lift'' a curve $\a$ from the compactification divisor 
$\SC_{\g}$ to an almost parallel curve $\hat\a$ tangent to $\SP$,
then since $\nabla_{\dot\a}{\dot\a}\subset T\SC_{\g}$ can be
arbitrary, so might we expect $\nabla_{\dot{\hat\a}}{\dot{\hat\a}}$
to have an arbitrary non-orthogonal component.
\end{remark}

Our main result in this section is
\begin{thm}\label{FlatParallel} In the notation above, for $X\in \SP$ 
and $Y\subset \SP$ of unit norm, we have
\begin{equation}
(\nabla_X Y)^\bot = <\nabla_X Y, \SP^{\bot}> = O(\ell_{\g}^2).
\end{equation}

In particular, $<\nabla_X Y,\grad \ell_{\g}> =  O(\ell_{\g}^2)$ and
$<\nabla_X Y,J\grad \ell_{\g}> =  O(\ell_{\g}^2)$.
\end{thm}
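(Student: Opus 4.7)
For $Y$ a section of $\SP$ and $X \in T_p L_{\g}(\ell)$, both $\<Y,\grad\ell_{\g}\>$ and $\<Y,J\grad\ell_{\g}\>$ vanish identically on $L_{\g}(\ell)$, so the standard identity
\[
\<\nabla_X Y,W\> = X\<Y,W\> - \<Y,\nabla_X W\>
\]
applied to $W=\grad\ell_{\g}$ and to $W=J\grad\ell_{\g}$ (using that $J$ is parallel for the K\"ahler WP metric) yields
\[
\<\nabla_X Y,\grad\ell_{\g}\> = -\text{Hess}\,\ell_{\g}(X,Y),\qquad
\<\nabla_X Y,J\grad\ell_{\g}\> = \text{Hess}\,\ell_{\g}(X,JY).
\]
An immediate check shows $\SP$ is $J$-invariant, so both claims reduce to the single estimate
\[
|\text{Hess}\,\ell_{\g}(X,Z)| = O(\ell_{\g}^2)\qquad\text{for all unit }X,Z \in \SP.
\]

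\textbf{Translation of the $\SP$-condition into collar Fourier data.} Let $\Phi,\Psi$ be the harmonic quadratic differentials dual to $X,Z$. The first-variation formula \eqref{FirstVariation}, applied both to $X$ and to $JX$ (which corresponds to the Hopf differential $-i\Phi$), shows that $X \in \SP$ is equivalent to the complex vanishing
\[
\int_{\g}\f{\Phi}{g_0}\,ds = 0,
\]
and similarly for $\Psi$. In standard collar coordinates $w=u+iv$ around $\g$, one has the Laurent-type expansion $\Phi = \sum_n c_n e^{niw}dw^2$, and this integral picks out precisely the constant Fourier coefficient $c_0$ at the core; hence $X,Z \in \SP$ amounts to the vanishing of the zero Fourier mode of $\Phi/g_0$ and $\Psi/g_0$ along $\g$.

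\textbf{Estimation of the two terms in the Hessian formula.} Using \eqref{formula:HessL},
\[
\text{Hess}\,\ell_{\g}(X,Z) = -2\int_{\g}(\Delta-2)^{-1}\!\!\left(\f{\re\Phi\bar\Psi}{g_0^2}\right)ds + \int_{\g}\bigl(U^{\Phi}_y U^{\Psi}_y + U^{\Phi}U^{\Psi}\bigr)ds.
\]
For the \emph{second integral}, parametrize $\g$ by arclength of period $\ell_{\g}$ and Fourier-expand $-\im\Phi/g_0 = \sum_{n\ne 0}f_n e^{2\pi ins/\ell_{\g}}$ (and analogously for $\Psi$, with coefficients $h_n$). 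Solving \eqref{UPhiEquationb} mode-by-mode gives $U^{\Phi}_n = f_n/(1+4\pi^2 n^2/\ell_{\g}^2)$, and Plancherel produces
\[
\int_{\g}\bigl(U^{\Phi}_y U^{\Psi}_y + U^{\Phi}U^{\Psi}\bigr)ds \;=\; \ell_{\g}\sum_{n\ne 0}\f{(1+4\pi^2 n^2/\ell_{\g}^2)f_n\overline{h_n}}{(1+4\pi^2 n^2/\ell_{\g}^2)^2} \;\lesssim\; \ell_{\g}^3\sum_{n\ne 0}\f{|f_n h_n|}{n^2},
\]
which, via Cauchy--Schwarz and the WP-norm constraint on the surviving Fourier coefficients of $\Phi,\Psi$, contributes the desired $O(\ell_{\g}^2)$.

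For the \emph{first integral}, the $\SP$-condition removes the dominant contribution to $\re\Phi\bar\Psi/g_0^2$ arising from the $c_0$ mode near the pinching geodesic; a barrier/comparison argument for $(\Delta-2)u = -2\re\Phi\bar\Psi/g_0^2$, in the spirit of Lemma~\ref{subsolution}, together with Fourier separation in the collar, shows that the restriction of the resulting $u$ to $\g$ is also $O(\ell_{\g}^2)$.

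\textbf{Main obstacle.} The delicate step is the first integral: the precise quantification of how the vanishing of the zero Fourier mode in the collar suppresses $(\Delta-2)^{-1}$ restricted to $\g$ by a factor $\ell_{\g}^2$. This requires asymptotic control of the Green's function of $\Delta-2$ on a surface with a short geodesic acting on inputs orthogonal to the length/twist directions, in the spirit of Wolpert's expansions of the WP metric near the Deligne--Mumford compactification divisor.
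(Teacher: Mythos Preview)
Your reduction to the Hessian bound is correct and matches the paper's: the identity $\<\nabla_X Y,\grad\ell_\g\>=-\text{Hess}\,\ell_\g(X,Y)$, the $J$-invariance of $\SP$, and the translation of $X\in\SP$ into $\int_\g\Phi/g_0\,ds=0$ (equivalently, vanishing of the zero Fourier mode in the collar) are exactly how the paper proceeds.

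The gaps are in the two integral estimates. For the energy term, your Plancherel identity is fine, but the step ``via Cauchy--Schwarz and the WP-norm constraint on the surviving Fourier coefficients'' hides the real work. What you need is a bound on $\sum_{n\ne0}|f_n|^2=\ell_\g^{-1}\int_\g|\im\Phi/g_0|^2\,ds$, and the WP-norm $\int_S|\Phi|^2/g_0^2\,dA=1$ by itself gives no control of $|\Phi|/g_0$ on the short core curve. The paper obtains this by first proving (its Step~1) that for $X\in\SP$ the function $|\Phi|$ decays \emph{exponentially} into the collar: a Fourier/max-principle argument on the cylinder shows $|\Phi(z_0)|^2\le D_2\cosh(\sqrt8\pi x_0)$ with $D_2=O(e^{-\sqrt8\pi/\ell})$. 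That decay (not the WP norm) is what makes the energy term negligible; the paper then bounds it by $\ell(\max_\g|\Phi|/g_0)^2=O(\ell^{-3}e^{-\sqrt8\pi/\ell})$ via the max-principle estimate on $U$, which is more direct than your Plancherel route but rests on the same missing ingredient.

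For the first integral you essentially defer to ``Green's function asymptotics in the spirit of Wolpert's expansions,'' and you acknowledge this as the main obstacle. The paper does \emph{not} invoke such machinery: it reduces, by the maximum principle, to an explicit scalar ODE $u''-2\ell^2\sec^2(\ell x)\,u=D_3\cosh(\sqrt8\pi x)\cos^2\ell x$ on the collar interval with boundary value $C_0$, solves it by variation of parameters using the homogeneous solutions $u_1=\tan\ell x$ and $u_2=x\tan\ell x+1/\ell$, and reads off $u(0)=O(\ell)$. Note the bookkeeping: the restriction of $-2(\Delta-2)^{-1}(|\Phi|^2/g_0^2)$ to $\g$ is only $O(\ell)$, not $O(\ell^2)$ as you state; the second factor of $\ell$ comes from integrating over a curve of length $\ell$. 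This ODE computation is the core of the argument and is precisely what your proposal is missing.
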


\begin{remark}
Similar results were obtained recently by Wolpert \cite{wolpert:behavior08},
using his estimates on the Hessian.  From the formula for the Hessian
presented in \eqref{formula:d2L}, we see in the present derivation
the elementary nature of the expansion of $\nabla_X Y$ in $\ell_{\g}$.
The flatness of order $O(\ell_{\g}^2)$ occurs because the (explicit)
rotationally invariant even solution $u(x)= x \tan \ell x + 1/\ell$
of the Jacobi equation
has the following property: at the core geodesic of the cylinder, 
this function $u(x)$ is 
smaller by a factor comparable to $O(\ell_{\g})$ than it is on the 
boundary of the cylinder.  Since the geodesic has length
$O(\ell_{\g})$, the dominant term of the integral of $u(x)$ over the
geodesic decays like $O(\ell_{\g}^2)$.
\end{remark}

\begin{proof}
We begin the proof of Theorem~\ref{FlatParallel} with a preliminary 
proposition characterizing the quadratic differentials which represent 
elements of $\SP$.

\begin{prop} \label{NoPeriod} 
Let $X \in T_{[M,g]}\ST$ with $X\in\SP$ and $\Phi=\Phi_X$ be a 
holomorphic quadratic differential on the surface $(S,g)$ for which 
$\bar\Phi/g$ is a harmonic Beltrami differential representing $X$. 
Then for $\g$ the geodesic on $S$ used to define $L_{\g}(\ell)$,
we have 
\begin{equation}\label{formula:NoPeriod}
\int_{\g}\f{\Phi}{g}ds =0.
\end{equation}
\end{prop}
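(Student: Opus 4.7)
The plan is to interpret the two orthogonality conditions defining $\SP$ as the vanishing of the real and imaginary parts of the period $\int_\g \Phi/g\, ds$, using the first variation formula \eqref{FirstVariation} and the Kähler structure on $\ST(S)$.

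First, recall that $\SP$ is by definition \WP-orthogonal to both $\grad\ell_\g$ and $J\grad\ell_\g$. Since $X\in\SP$, the condition $\langle X,\grad\ell_\g\rangle = 0$ is the statement $d\ell_\g(X)=0$. Taking $\G(t)$ to be a \WP\ path with $\dot\G(0)=X$ (represented by the harmonic Beltrami differential $\bar\Phi/g$), formula \eqref{FirstVariation} gives
\begin{equation*}
0 \;=\; d\ell_\g(X) \;=\; \int_{\g}\frac{\re\Phi}{g}\,ds.
\end{equation*}
This disposes of the real part of the period.

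For the imaginary part, I would use that the almost complex structure $J$ on $T_{[g]}\ST(S)$ acts on harmonic Beltrami differentials by multiplication by $i$, so that if $X$ is represented by $\bar\Phi/g$, then $JX$ is represented by $i\bar\Phi/g = \overline{-i\Phi}/g$, i.e.\ by the harmonic Beltrami differential whose holomorphic \qd\ is $-i\Phi$. Applying \eqref{FirstVariation} to $JX$ and using $\re(-i\Phi)=\im\Phi$ yields
\begin{equation*}
d\ell_\g(JX) \;=\; \int_{\g}\frac{\re(-i\Phi)}{g}\,ds \;=\; \int_{\g}\frac{\im\Phi}{g}\,ds.
\end{equation*}
On the other hand, the \WP\ metric is Kähler \cite{Ahl}, so $\langle JX,\grad\ell_\g\rangle = -\langle X, J\grad\ell_\g\rangle$, and the latter vanishes by the second defining orthogonality condition for $\SP$. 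Hence $d\ell_\g(JX)=0$, and so $\int_\g \im\Phi/g\,ds = 0$.

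Combining the two vanishings gives
\begin{equation*}
\int_{\g}\frac{\Phi}{g}\,ds \;=\; \int_{\g}\frac{\re\Phi}{g}\,ds \;+\; i\int_{\g}\frac{\im\Phi}{g}\,ds \;=\; 0,
\end{equation*}
which is \eqref{formula:NoPeriod}. The only subtlety — and the one place I would double-check carefully — is the sign/complex-conjugation bookkeeping in identifying the Beltrami differential representing $JX$ with the holomorphic \qd\ $-i\Phi$, together with the compatibility of this $J$ with the $J$ appearing in the definition of $\SP$; modulo that, the argument is a one-line consequence of \eqref{FirstVariation} and the Kähler property of the \WP\ metric.
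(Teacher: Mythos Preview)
Your proof is correct and follows essentially the same route as the paper: use \eqref{FirstVariation} to turn $\langle X,\grad\ell_\g\rangle=0$ into the vanishing of the real part, then use the K\"ahler property $\langle X,J\grad\ell_\g\rangle=-\langle JX,\grad\ell_\g\rangle$ together with \eqref{FirstVariation} applied to $JX$ (represented by $i\bar\Phi/g$) to obtain the vanishing of the imaginary part. Your bookkeeping on the holomorphic \qd\ associated to $JX$ is in fact slightly more careful than the paper's, which writes $\re i\Phi$ where $\re(-i\Phi)$ is meant; since the conclusion is a vanishing statement, the sign is immaterial.
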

\begin{proof}
Since $X \in \SP$, we have $<X, \grad \ell_{\g}>=0$. But from
\eqref{FirstVariation}, we find that  
\begin{align*}
0&= <X, \grad \ell_{\g}>\\
&=X(\ell_{\g})\\
&=\int_{\g}\f{\re\Phi}{g}ds.
\end{align*}
Of course the other defining condition of $X \in \SP$ is that 
$<X, J\grad \ell_{\g}> =0$.  But since the \WP metric is K\"ahler,
this implies
\begin{align*}
0&=<X, J\grad \ell_{\g}>\\
&=-<JX, \grad \ell_{\g}>.
\end{align*}

But $JX$ is represented by $i\bar\Phi/g$, and so we derive as above
that 
\begin{equation*}
0=\int_{\g}\f{\re i\Phi}{g}ds,
\end{equation*}
proving the result.

\end{proof}

\begin{remark}
The covectors in $T^*\SC_{\g}$ are represented by holomorphic
quadratic
differentials which have simple poles at the nodes. On
the other hand, a generic covector in $T_{\Sigma}^*\bar{\ST}$
at an element $\Sigma \in \SC_{\g}$ has 
a second order pole with a non-vanishing residue at the node obtained 
by pinching $\g$.
\end{remark}

\emph{Continuation of the Proof of Theorem~\ref{FlatParallel}} The heart of the
matter is a computation of $\text{Hess} \ell_{\g}(X,X)$, in particular to 
prove

\begin{lem} \label{FlatnessLemma}
For $X \subset \SP$ as above, we have 
\begin{equation}
\text{Hess} \ell_{\g}(X,X) = O(\ell_{\g}^2).
\end{equation}
\end{lem}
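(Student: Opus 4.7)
The plan is to bound each of the two non-negative summands in the Hessian formula~\eqref{formula:d2L} separately, using the orthogonality $\int_{\gamma}\Phi/g\,ds=0$ supplied by Proposition~\ref{NoPeriod}.

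For the second term I would use the kernel form~\eqref{formula:KernelFormula}. Parametrizing $\gamma$ by arclength $s\in[-\ell/2,\ell/2]$ with $\ell=\ell_\gamma$, so $d(p,q)-\ell/2\in[-\ell/2,0]$ on $\gamma\times\gamma$, I would Taylor-expand
\[
\cosh\bigl(d(p,q)-\f{\ell}{2}\bigr)=1+\f{1}{2}\bigl(d(p,q)-\f{\ell}{2}\bigr)^{2}+O(\ell^{4}).
\]
The leading $1$ contributes $\bigl(\int_{\gamma}\im\mu\,ds\bigr)^{2}/(2\sinh(\ell/2))$, which vanishes since $\int_{\gamma}\im\mu\,ds=-\im\int_{\gamma}\Phi/g\,ds=0$ by Proposition~\ref{NoPeriod}. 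The remaining quadratic-in-$d$ contribution is bounded by $C\ell^{-1}\cdot\ell^{2}\cdot\int_{\gamma}|\im\mu|^{2}\,ds=O(\ell_\gamma^{2})$, since $1/\sinh(\ell/2)=O(1/\ell)$ and $\int_{\gamma}|\im\mu|^{2}\,ds$ is controlled by the WP norm of $X$.

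For the first term, the collar lemma places $\gamma$ as the core of a standard hyperbolic collar $\SC_{\ell}$ modeled on the cylinder of subsection~\ref{annulus}. Laurent-expanding $\Phi$ in the collar coordinate $w=x+iy$ gives $\Phi=\sum_{n}a_{n}e^{(n+2)w}\,dw^{2}$, and the integral over the core picks out exactly the rotationally invariant $n=-2$ mode, so $\int_{\gamma}\Phi/g\,ds=c\cdot a_{-2}$ for a nonzero constant $c$ depending on the collar metric on $\gamma$; Proposition~\ref{NoPeriod} therefore forces $a_{-2}=0$. I would then solve $(\Delta-2)u=-2|\Phi|^{2}/g^{2}$ by Fourier decomposition in $y$. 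The rotationally invariant part of $|\Phi|^{2}/g^{2}$, built from diagonal cross-terms $|a_{n}|^{2}$ with $n\neq -2$, drives an ODE of Jacobi type whose solutions are combinations of the explicit functions $u(x)=x\tan(\ell x)+1/\ell$ of the remark following the theorem statement; this solution is $O(\ell_\gamma)$ smaller on the core than at the collar boundary, so its integral over $\gamma$ (length $\ell_\gamma$) is $O(\ell_\gamma^{2})$. The genuinely $y$-oscillating components of $|\Phi|^{2}/g^{2}$ produce exponentially decaying solutions contributing only lower-order remainders.

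The main obstacle is the detailed ODE analysis in the collar: one needs the claimed $O(\ell_\gamma)$ core-to-boundary ratio uniformly in the Fourier index $n$, and must verify that matching solutions on $\SC_{\ell}$ to boundary data inherited from the rest of the surface does not spoil the smallness. Without the cancellation $a_{-2}=0$ forced by $X\in\SP$, the dominant rotationally invariant driver of the first-term equation would produce only $\text{Hess}\,\ell_{\gamma}(X,X)=O(1)$—consistent with the direct computation in subsection~\ref{annulus} that $\ddot\ell=(2\pi^{2})^{-1}$ for the unit WP vector along the length coordinate, which is not in $\SP$.
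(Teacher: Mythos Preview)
Your outline for the first summand is essentially the paper's argument: Laurent/Fourier-expand $\Phi$ on the collar, use Proposition~\ref{NoPeriod} to kill the rotationally invariant mode, and then analyze the Jacobi-type ODE with homogeneous solutions $\tan(\ell x)$ and $x\tan(\ell x)+1/\ell$. The paper carries this out by first proving (its Step~1) that the vanishing zeroth mode forces $|\Phi|^{2}$ to decay like $D\cosh(\sqrt{8}\pi x)$ with $D=O(e^{-\sqrt{8}\pi/\ell})$ into the collar, and then (its Step~3) solving the resulting boundary-value problem by variation of parameters to get $u|_{\gamma}=O(\ell)$. Your sketch invokes only the core-to-boundary ratio of the \emph{homogeneous} solution; the particular solution must also be shown small, and that is exactly where the exponential decay of the inhomogeneity is used. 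You flag this as the ``main obstacle,'' and it is indeed where all the work sits.

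Your treatment of the second summand is genuinely different from the paper's, and cleaner if it can be made to go through. The paper does not Taylor-expand the kernel at all; instead it reuses the pointwise decay of $|\Phi|$ from Step~1 together with the crude bound \eqref{EnergyBoundByMax}, obtaining the (vastly stronger) estimate $\int_{\gamma}U_{y}^{2}+U^{2}=O(\ell^{-3}e^{-\sqrt{8}\pi/\ell})$. Your route exploits only the single cancellation $\int_{\gamma}\im\mu=0$ and the smallness of $\ell$, which is more economical. The gap is your assertion that $\int_{\gamma}|\im\mu|^{2}\,ds$ is ``controlled by the WP norm of $X$'': this is true, but it is not a tautology and you do not prove it. One way is Parseval on the collar: writing $\Phi=\sum_{n}c_{n}e^{2\pi n z}$ there, one has $\int_{\gamma}|\mu|^{2}ds=\ell^{-3}\sum_{n}|c_{n}|^{2}$, while $\|\mu\|_{WP}^{2}\ge\sum_{n}|c_{n}|^{2}\int_{-L_{0}}^{L_{0}}e^{4\pi n x}\cos^{2}(\ell x)\ell^{-2}\,dx$ and each of the latter integrals is at least $\pi/(2\ell^{3})$; hence $\int_{\gamma}|\mu|^{2}ds\le\tfrac{2}{\pi}\|\mu\|_{WP}^{2}$ uniformly in $\ell$. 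Note also that your displayed bound $C\ell^{-1}\cdot\ell^{2}\cdot\int_{\gamma}|\im\mu|^{2}$ yields only $O(\ell)$ unless you insert the extra factor of $\ell$ coming from Cauchy--Schwarz on the double integral (or from the operator-norm estimate of the remainder kernel); with that correction and the $O(1)$ bound just described, you do get $O(\ell^{2})$.
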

To see that this is enough, note first that polarization will imply
then that $\text{Hess} \ell_{\g}(X,Y) = O(\ell_{\g}^2)$ for $X,Y \subset \SP$
of unit norm. But then
\begin{align}
-<\nabla_XY, \grad \ell_{\g}> &= -(\nabla_XY)\ell_{\g}\notag \\
&=(XY-\nabla_XY)\ell_{\g} \qquad\text{since $X,Y$ are tangent to
  $L_{\g}(\ell)$} \notag \\
&=\text{Hess}\ell_{\g}(X,Y) \qquad\text{by definition} \notag\\
&= O(\ell_{\g}^2).\label{ConnectionAlmostFlat} \qquad\text{by
  Lemma~\ref{FlatnessLemma}}
\end{align}
Moreover
\begin{align*}
<\nabla_XY, J\grad \ell_{\g}> &= -<J\nabla_XY, \grad \ell_{\g}> \\
&\hskip1cm \qquad\text{as $J$ is a \WP isometry} \\
&=-<\nabla_XJY, \grad \ell_{\g}>\\ 
&\hskip1cm\qquad\text{as \WP is a
  K\"ahler metric}\\
&= -<\nabla_XZ, \grad \ell_{\g}>
\end{align*}
for some $Z\in \SP$ as $\SP$ is $J$-invariant, being orthogonal to a 
$J$-invariant subspace of a K\"ahler manifold. Then
$<\nabla_XY, J\grad \ell_{\g}>= O(\ell_{\g}^2)$ follows in the
manner of \eqref{ConnectionAlmostFlat}.  This concludes the 
proof of Theorem~\ref{FlatParallel}, pending the proof of the main
lemma.
\end{proof}

\begin{proof}[Proof of Lemma~\ref{FlatnessLemma}]
The basic idea of the proof is to estimate the terms in the
formula \eqref{formula:d2L} for the Hessian, where we 
take $X$ to be represented by a harmonic Beltrami 
differential $\mu = \bar\Phi/g$, and we apply the features
of $X \subset \SP \subset TL_{\g}(\ell)$ to prove that those
terms are small. In particular, because the length
$L(g,\g)$ is small, the geodesic $\g$ is embedded in a wide, 
thin collar. Then, by Proposition~\ref{NoPeriod}, we
learn that $|\Phi|$ must decay rapidly towards the 
center of the collar.  Those facts together are 
enough to conclude that each of the pair of terms 
in \eqref{formula:d2L} is small.

We carry out the plan in steps.

\paragraph{Step 0. The collar.} Consider the collar 
$\SC = [-\f1{\ell}\sec^{-1}\f1{\ell}, \f1{\ell}\sec^{-1}\f1{\ell}]
\times [0,1]$ with horizontal edges 
$ [-\f1{\ell}\sec^{-1}\f1{\ell}, \f1{\ell}\sec^{-1}\f1{\ell}] \times
\{0,1\}$ identified, equipped with the hyperbolic metric 
$g_0 = \ell^2\sec^2\ell x |dz^2|$.  This collar $\SC$ embeds 
in a neighborhood of the geodesic $\g$, with 
$\{0\} \times [0,1]$ mapping onto $\g$.

\paragraph{Step 1. Decay of $|\Phi|$} On this annular collar
$\SC$, we may regard the quadratic differential $\Phi$ as a function
(or more formally, we divide $\Phi$ by the nonvanishing holomorphic 
quadratic differential $dz^2$ to obtain a function in the 
quotient). Then, by the rotational invariance of the collar
(or by working in Fermi coordinates), we see that $g$ may
be taken as constant along $\g$, and so the conditions
in Proposition~\ref{NoPeriod} imply that $\Phi$ has no period in
the collar.  Finally, we estimate boundary conditions.

Since, for $\ell$ small, every $X\subset \SP \subset TL_{\g}(\ell)$
can be approximated on compacta away from $\g$ by an
integrable  meromorphic
quadratic differential on a noded Riemann surface in the
compactification divisor $\SC_{\g}$, and the collection of such 
quadratic differentials of unit \WP norm is compact, we see that 
we may take $|\Phi|$ as bounded on the horocycle of length
one on $\partial\SC$.

Because $\Phi$ is holomorphic and hence harmonic, 
the vanishing of the period together
with the fixed boundary conditions is enough to show that 
$|\Phi|$ decays rapidly on the interior of the cylinder.

We can obtain an estimate of this decay through a Fourier 
analysis of $\Phi$. On the cylinder $\SC$, set 
$\Phi = \sum a_n(x) e^{2\pi i ny}$.  Then the harmonicity
of $\Phi$ implies
\begin{equation*}
0=\triangle \Phi = \sum (a_n''(x) - 4\pi n^2 a_n(x)) e^{2\pi i ny}
\end{equation*}
and in particular the equations
\begin{equation*}\label{andoubleprime}
a_n''(x) - 4\pi n^2 a_n(x)=0.
\end{equation*}
Now as $a_0(0)= \int_{x=0}\Phi =0$ by \eqref{formula:NoPeriod}, we find that 
$\int_{\g^*}\Phi =0$ along any cycle $\g^*$ in 
$\SC_{\g}$ homologous to $\g$, hence $a_0(x)=0$.

Of course, $\int_{\partial \SC_{\g}} |\Phi|^2 \le C_0$ by our argument
on limits above, and so
\begin{equation}\label{anbdy}
\sum_{n \ne 0} a_n^2(\pm \f{1}{\ell}\sec^{-1}\f{1}{\ell})= \int_{\partial
  \SC_{\g}} |\Phi|^2 \le C_0.
\end{equation}
We note that 
\begin{align}\label{andiffineq}
(\sum_{n \ne 0}a_n^2(x))''&=\sum_{n \ne 0}2a_n''a_n + 2(a_n')^2\\ \notag
&=\sum_{n \ne 0}8\pi^2n^2a_n^2 + 2 (a_n')^2 \qquad\text{by
    \eqref{andoubleprime}}\\ \notag
&\ge 8\pi^2\sum_{n \ne 0}a_n^2. 
\end{align} 
Thus by the maximum principle applied to the 
differential inequality \eqref{andiffineq} with 
boundary conditions \eqref{anbdy}, we have
\begin{equation}\label{PhiOnVerticalLines}
\int_{x=x_0} |\Phi|^2 \le \f{C_0 \cosh\sqrt{8}\pi
  x_0}{\cosh\sqrt{8}\pi(\f{1}{\ell}\sec^{-1}\f{1}{\ell})} :=D\cosh\sqrt{8}\pi
  x_0.
\end{equation}
Finally consider $\Phi(z_0)$, for $z_0 = x_0 + i y_0$,
a fixed point of the collar. Note that on our parametrization of the 
collar, the balls of radius $\f{1}{2}$ inject into the parameter domain.
As $\Phi$ is harmonic,
\begin{align*}
|\Phi(z_0)| &\le \f{4}{\pi}|\int_{B_{\f{1}{2}}(z_0)}\Phi|\\
&\le \f{4}{\sqrt{\pi}}(\int_{B_{\f{1}{2}}(z_0)}|\Phi|^2)^{\f12}\\
&< \f{4}{\sqrt{\pi}}(\int_{|\re(z-z_0)\le \f{1}{2}}|\Phi|^2)^{\f12}\\
&= \f{4}{\sqrt{\pi}}(\int_{\re z_0 -\f{1}{2}}^{\re z_0 +\f{1}{2}} \int_{x=t}
|\Phi|^2dydt)^{\f12}\\
&\le \f{4}{\sqrt{\pi}}(\int_{\re z_0 -\f{1}{2}}^{\re z_0 +\f{1}{2}} D\cosh\sqrt{8}\pi
t dt)^{\f12} \qquad\text{by \eqref{PhiOnVerticalLines}}\\
&=D_1(\cosh\sqrt{8}\pi x_0)^{\f12}\\
\end{align*}
We conclude that 
\begin{equation*}
|\Phi(z_0)|^2 < D_2\cosh\sqrt{8}\pi x_0,
\end{equation*}
where (using \eqref{PhiOnVerticalLines})
\begin{equation} \label{DEstimate}
D_2 = O(e^{-\f{\sqrt{8}\pi}{\ell}})
\end{equation}
in $\ell$, for $\ell$ small, justifying our remark
about the decay of $|\Phi(z)|$ into the collar.

\paragraph{Step 2: the function $U$.}
There are two terms in the formula \eqref{formula:d2L} for the Hessian;
here we estimate the one involving the energy of the function $U$.
In particular, we already know from \eqref{EnergyBoundByMax} that 
\begin{align*}
\int_{\g_0} U'^2 + U^2 &\le \ell(\max \f{|\Phi|}{g})^2\\
&\le \ell g^{-2}|_{x=0}D_2\\
&=O(\ell^{-3}e^{\f{-\sqrt{8}\pi}{\ell}})
\end{align*}
using $g|_{x=0} = \ell^2$ and \eqref{DEstimate}.  This term is then
consistent with the statement of the lemma that $\text{Hess}\ell(X,X) \le
O(\ell^2)$.

\paragraph{Step 3: the function $(\triangle
  -2)^{-1}\f{|\Phi|^2}{g^2}$.} We are left to estimate the second term
     in the expression \eqref{formula:d2L} for $\text{Hess}\ell(X,X)$, namely
\begin{equation*}
\int_{\g}-2(\triangle-2)^{-1}\f{|\Phi|^2}{g^2}.
\end{equation*}

In particular, we need to estimate the solution $u_0$ to the equation
\begin{equation}
(\triangle-2)u_0=-2\f{|\Phi|^2}{g^2}.
\end{equation}
evaluated on the core geodesic.

We again estimate the solution to this by estimating it on the 
cylinder $\SC_{\g}$, on which we have good control on
$\f{|\Phi|^2}{g^2}$. Of course we already know from the maximum
principle (see \eqref{maxprinciplebound}) that 
\begin{equation*}
u_0 \le \sup \f{|\Phi|^2}{g^2}.
\end{equation*}

Now the latter is bounded $\SC_{\g}$, and the complement of
 $\SC_{\g}$ has bounded geometry on which 
$\int _{M\sim\SC_{\g}}\f{|\Phi|^2}{g^2}dA_g \le 1$
(because $\Phi$ is of unit \WP norm).  We conclude
 that 
there is a $C_0$ for which 
\begin{equation*}
\f{|\Phi|^2}{g^2}\big|_{\partial\SC_{\g}} \le C_0
\end{equation*}
Thus by the maximum principle, it is enough to estimate the solution
$u$ of the boundary value problem

\begin{align*}
\f{1}{g}u''(x) - 2u(x) &= \f{D_2\cosh\sqrt{8}\pi x}{g^2}\\
u(\pm\f{1}{\ell}\sec^{-1}\f{1}{\ell}) &= C_0
\end{align*}
Using that $g=\ell^2\sec^2\ell x$, we rewrite the equation above as
\begin{align}\label{BVP}
u''(x) - 2\ell^2\sec^2\ell x u(x) &= D_3\cosh(\sqrt{8}\pi x)\cos^2\ell
x\\ \notag
u(\pm\f{1}{\ell}\sec^{-1}\f{1}{\ell}) &= C_0,
\end{align}
where $D_3 = O(\ell^{-2}e^{-\sqrt{8}\pi/{\ell}})$. The homogeneous 
equation
\begin{equation*}
u''(x) - 2\ell^2\sec^2\ell x u(x) = 0
\end{equation*}
has the two solutions
\begin{align}
u_1(x) &= \tan \ell x\\ \notag
u_2(x) &= x\tan \ell x + \f{1}{\ell}.
\end{align}
Using these, one can solve for a particular solution of the form
$u_0 = u_1v_1 + u_2v_2$ by elementary integrations, namely
\begin{align*}
v_1 &= \int u_2  D_3\cosh(\sqrt{8}\pi x)\cos^2\ell x\\ \notag
v_2 &= - \int u_1  D_3\cosh(\sqrt{8}\pi x)\cos^2\ell x.
\end{align*}
An asymptotic expansion shows that 
$u_1v_1 + u_2v_2|_{\partial \SC_{\g}} = O(1)$; this is 
actually quite remarkable, as both $u_1v_1$ and $u_2v_2$
are separately comparable to $\ell^{-3}$ on 
$\partial \SC_{\g}$. Moreover, 
$u_1v_1 + u_2v_2|_{x=0} = O(e^{-\sqrt{8}\pi/{\ell}}\ell^{-2})$.
With these computations in mind, we observe that the general 
solution to \eqref{BVP} is given by 
\begin{equation*}
u= c_1u_1 + c_2u_2+ u_1v_1 + u_2v_2.
\end{equation*}
By our estimates on $ u_1v_1 + u_2v_2|_{\partial \SC_{\g}}$ 
and the definitions of $u_1$ and $u_2$, we see that we may
take $c_1=O(\ell)$ and $c_2=O(\ell^2)$.  Thus we compute that
\begin{align*}
u(0) &= c_1u_1(0) + c_2u_2(0)+ u_1(0)v_1(0) + u_2(0)v_2(0)\\
&= c_2u_2(0)+ u_2(0)v_2(0) \qquad\text{since $u_1(0) =0$}\\
&=O(\ell) + O(e^{-\sqrt{8}\pi/{\ell}}\ell^{-3})\\
&=O(\ell).
\end{align*}
We conclude that 
$-2(\triangle-2)^{-1}\f{|\Phi|^2}{g^2}\Big|_{\g} =u_0\Big|_{\g} \le u(0) = O(\ell)$ and so
\begin{align*}
\int_{\g} -2(\triangle-2)^{-1}\f{|\Phi|^2}{g^2} = \int_{\g} u ds &\le
O(\ell)\int_{\g}ds\\
&=O(\ell^2).
\end{align*}

Combining the estimate for this term of the Hessian with the 
estimate for the other term of the Hessian discussed in 
Step 2 concludes the proof of the lemma.

\end{proof}

\section{The Thurston metric and the Weil-Petersson metric} \label{Thurston}

From the
formula \eqref{formula:d2L}, we can easily derive the result
\cite{wolpert:thurston} (see also \cite{mcmullen:thermodynamics} and 
\cite{bonahon:currents}) that the Thurston metric
is a multiple of the 
Weil-Petersson metric.

\subsection{The Thurston Metric} We begin by recalling the Thurston
metric. To define this, imagine a sequence $\{\g_n\}$ of closed curves
which are becoming equidistributed in the sense that if $B$ is a ball in the
unit tangent bundle $T^1S$, 
and we lift $\g_n$ to its representative in $T^1S$, then
$$
\lim_{n\to\infty}\f{\ell(\g_n\cap B)}{\ell(\g_n)} = 
\f{\text{Volume}(B)}{\text{Volume}(T^1S)}.
$$

Thurston noted that since, for such a sequence of curves,
$\f{d\ell(\g_n)}{\ell(\g_n)}\to 0$ as $n\to\infty$, then
$\f{\text{Hess}\ell(\g_n)}{\ell(\g_n)}$ would tend to a symmetric
quadratic form on $T(S)$; by the convexity of the length function,
this tensor would be positive semi-definite, hence a (pseudo)-metric. 
Wolpert showed that

\begin{thm} \cite{wolpert:thurston}. The Thurston metric is a 
multiple of the Weil-Petersson metric.
\end{thm}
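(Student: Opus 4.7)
The plan is to divide the Hessian formula \eqref{formula:d2L} by $\ell_n=\ell(\g_n)$ and pass to the limit as $n\to\infty$, exploiting that $\g_n$ equidistributes against Liouville measure on $T^1S$. I would handle the two summands of the formula separately.

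For the first summand the integrand $-2(\D-2)^{-1}|\Phi|^2/g_0^2$ is a fixed smooth function on $S$, so direct application of equidistribution yields
\[
\f{1}{\ell_n}\int_{\g_n}-2(\D-2)^{-1}\f{|\Phi|^2}{g_0^2}\,ds\ \lra\ \f{1}{\area(S,g_0)}\int_S -2(\D-2)^{-1}\f{|\Phi|^2}{g_0^2}\,dA_{g_0}.
\]
Self-adjointness of $(\D-2)^{-1}$ in $L^2(S,dA_{g_0})$, together with the identity $(\D-2)(-\tfrac12)=1$, converts the right side to $\area(S)^{-1}\|\mu\|^2_{WP}$, a manifest scalar multiple of the \WP norm.

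For the second summand I would use the kernel representation \eqref{formula:KernelFormula}. As $\ell_n\to\infty$, the factor $\cosh(d(p,q)-\ell_n/2)/(2\sinh(\ell_n/2))$ converges uniformly on bounded-$d$ regions to $\tfrac12 e^{-d(p,q)}$, and the range $d(p,q)\sim\ell_n/2$ contributes negligibly. Parametrizing $q=\phi_u(p)$ along $\g_n$, the second summand divided by $\ell_n$ becomes
\[
\f{1}{2\ell_n}\int_{-\ell_n/2}^{\ell_n/2}e^{-|u|}\Bigl[\int_{\g_n}\im\mu(\g_n(s))\,\im\mu(\phi_{-u}\g_n(s))\,ds\Bigr]du.
\]
For each fixed $u$, equidistribution applied to the continuous function $v\mapsto\im\mu(v)\im\mu(\phi_{-u}v)$ on $T^1S$ yields the geodesic-flow autocorrelation $C(u)=\vol(T^1S)^{-1}\int_{T^1S}\im\mu(v)\im\mu(\phi_{-u}v)\,dv$; dominated convergence using $|C(u)|\le\|\im\mu\|_\infty^2$ and the integrability of $e^{-|u|}$ produces the limit $\tfrac12\int_{-\infty}^{\infty}e^{-|u|}C(u)\,du$.

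The main obstacle, which is the substance of the theorem, is recognizing this last expression as itself a multiple of $\|\mu\|^2_{WP}$. Because $\im\mu$ arises from a holomorphic \qd, the function $v\mapsto\im(\mu(\pi v)e^{-2i\theta(v)})$ is a spin-$2$ section on $T^1S$ living in the discrete-series isotypic component $D_2^+$ of $L^2(T^1S)$ under $PSL(2,\bbr)$. On this irreducible piece the matrix coefficient $u\mapsto C(u)/\|\im\mu\|^2_{L^2(T^1S)}$ has a universal closed form independent of $\Phi$, so the weighted integral $\tfrac12\int e^{-|u|}C(u)\,du$ is a fixed scalar multiple of $\int_{T^1S}|\im\mu|^2\,dv$. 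A routine angular integration over the circle fibers of $T^1S\to S$ shows the latter is in turn a fixed multiple of $\|\mu\|^2_{WP}$. Summing the two contributions produces a single universal constant times $\|\mu\|^2_{WP}$, which is Wolpert's conclusion that the Thurston metric is proportional to the \WP metric.
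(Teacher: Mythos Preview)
Your treatment of the first summand is identical to the paper's: equidistribution followed by self-adjointness of $(\Delta-2)^{-1}$ and the identity $-2(\Delta-2)^{-1}(1)=1$.

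For the second summand your argument is correct but genuinely different from the paper's. You reduce to the geodesic-flow autocorrelation $C(u)$ and then invoke the representation theory of $PSL(2,\bbr)$: since the functions coming from holomorphic quadratic differentials sit in the fixed $K$-type (weight $\pm 2$) of the discrete series $D_2^{\pm}$, and those weight spaces are one-dimensional in each irreducible summand, the normalized matrix coefficient $C(u)/\|\im\mu\|_{L^2(T^1S)}^2$ is a universal function of $u$ independent of $\Phi$; the exponential integral then forces proportionality to $\|\mu\|^2_{WP}$. The paper instead computes $C(u)$ (and its exponential transform) directly: it fixes a basepoint at the origin of the Poincar\'e disk, writes $\im\Phi$ at $G_t p$ explicitly as $\im(e^{2i\theta}\phi(r(t)e^{i\theta}))$, performs the $\theta$-integral via the mean value theorem for the harmonic function $\phi$, and is left with an elementary radial integral $\int_0^1 2(1-r)^2\,dr = \tfrac{2}{3}$, yielding the constant $\tfrac{1}{3}$ for $I_2$ and the total constant $\tfrac{4}{3\area(S)}$.

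Your route is more conceptual---it explains \emph{why} proportionality is automatic (irreducibility forces the matrix coefficient to be universal) and would generalize to other spin weights---but it imports nontrivial harmonic analysis on $\Gamma\backslash PSL(2,\bbr)$ and, as written, does not produce the numerical constant. The paper's route is elementary and self-contained, needing only the mean value property of harmonic functions, and delivers the explicit constant $4/3$ that can be checked against Wolpert's and McMullen's computations.
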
  \label{thurston wp}

The goal of the present section is to give a proof of this result that
proceeds from evaluating formula~\eqref{formula:d2L} on a 
sequence $\{\g_n\}$ of curves that are becoming equidistributed in $T^1S$.

\subsection{First Variation} We begin by first showing that
$d\ell(\g_n)/\ell(\g_n)\to0$ as $n\to\infty$. Recall from
\eqref{FirstVariation} that 
\begin{equation*}
\f d{dt}L(g_t,\g_{n,t})  = \int_{\g_n(0)}\f{\ree\Phi}{g_0}ds.
\end{equation*}

Now, the curves $\g_n$ have unit tangent vectors $\f d{ds}\g_n(s)$
which 
equidistribute themselves in $T^1S$, and so
\begin{align*}
\f1{\ell(\g_n)}\f d{dt}L(g_t,\g_{n,t})  
&= -\f1{\ell(\g_n)}\int_{\g_n}\f{\ree\Phi}{g_0}ds\\
&= -\int_{\g_n}\f{\ree\Phi}{g_0}\f{ds}{\ell(\g_n)}\\
&\rightarrow\iiint\limits_{T^1S}\f{\ree\Phi(p,\th)}{g_0}\f{d\vol_{T^1S}(p,\th)}{\vol(T^1S)}
\end{align*}
where we interpret the meaning of the notation as follows. In the
discussion so far, we have written $\ree\Phi$ to denote the value of
the expression $\ree\vp$, when the \qd $\Phi=\vp dz^2$ was written in
coordinates $z=x+iy$ with $\f\p{\p y}$ being tangent to the
geodesic. Now the vector field $\f\p{\p y}$ lifts to the canonical
vector fields in $T^1S$ tangent to the geodesic flow. We let the
expression $\ree\Phi(p,\th)$ denote the value of 
$\ree\Phi$ on the surface in terms of a coordinate $z=x+iy$ in 
which the geodesic direction described by $(p,\th) \in T^1S$
is in the coordinate direction $\f{\p}{\p y}$. 
Of course, if we
change surface coordinates so that $z_{\th}=e^{-i\th}z$, then for $\Phi=\vp_\th
dz^2_\th$, we have $\vp_\th=e^{2i\th}\vp_0$. 
Thus, when we integrate 
along the fiber of $T^1S \to S$
(with respect to $\th$ in the coordinates $(z,\th)$ for $T^1S$), we find 
$\int\f{\ree\vp(p,\th)}{g_0}d\th=0$.

\subsection{The second variation} We recall the (second) formula
\eqref{formula:d2L} for the second variation of length:
\begin{align*}
&\f1{\ell(\g_n)}\f{d^2}{dt^2}\ell(\g_n(t)) \\
&\hskip1cm= \f1{\ell(\g_n)}\int -(\Delta-2)^{-1}\f{2|\Phi|^2}{g^2_0}ds\\
&\hskip1.5cm+\f1{\ell(\g_n)}\iint_{\g_n \times \g_n}\im\mu(p)\f{\cosh(d(p,q) - \f{\ell(\g_n)}{2})}{2\sinh(\f{\ell(\g_n)}{2})}\im\mu(q)ds(p)ds(q)\\
&\hskip1cm= \f1{\ell(\g_n)}\int -2(\Delta-2)^{-1}\f{|\Phi|^2}{g^2_0}ds\\ 
&\hskip1.5cm+ \f1{\ell(\g_n)}\int_{\g_n}\f{\im\Phi(p)}{g_0(p)}\int_{\g_n}\f{\cosh(d(p,q) -
 \ell(\g_n)/2)}{2\sinh(\ell(\g_n)/2)}\f{\im\Phi(q)}{g_0(q)}dqdp\\
&\hskip1cm= I_1 + I_2.
\end{align*}

We examine the two integrals $I_1$ and $I_2$ separately. The first 
integral is immediate, as
$$
I_1 = \int_{\g_n} 
-2(\Delta-2)^{-1}\f{|\Phi|^2}{g^2_0}\f{ds}{\ell(\g_n)}\lra2\pi\iint\limits_M 
-2(\Delta-2)^{-1}\f{|\Phi|^2}{g^2_0}\f{d\area}{2\pi\area}
$$
by the equidistribution property, and
$$
\iint\limits_M -2(\Delta-2)^{-1}\f{|\Phi|^2}{g^2_0}\f{d\area}{\area} 
= \iint\{-2(\Delta-2)^{-1}(1)\}\f{|\Phi|^2}{g^2_0}\f{d\area}{\area}
$$
as the operator $(\Delta-2)^{-1}$ is self adjoint. Then 
as $-2(\Delta-2)^{-1}(1)=1$, we find that
\begin{equation}
I_1\lra\iint\f{|\Phi|^2}{g^2_0}\f{d\area}{\area}.\label{I1WP}
\end{equation} 

\subsection{The Integral $I_2$} We turn next to $I_2$, where the
computation is a bit more involved. Our basic plan mirrors our
discussion of the first variation; we extend the terms in the
integrand of $I_2$ to all of $T^1S$, and then integrate over the
circular fiber to be left with an integral 
over the surface.

We begin our more detailed discussion of the second (energy) integral
by considering the version \eqref{formula:KernelFormula} of it in terms of 
a geometric kernel, i.e.
$$
I_2 = \f1{\ell(\g_n)}\int_{\g_0}\f{\im\Phi(p)}{g_0(p)}\int_{\g_0}\f{\cosh(d(p,q) -
  L/2)}{2\sinh(L/2)}\f{\im\Phi(q)}{g_0(q)}dqdp
$$
where $L=\ell(\g_0)$. 

Now considering $\g_0$ as an embedded curve in the unit tangent bundle
$T^1(S,g_0)$, if we fix the point $p=(\bar p, v) \in T^1S$ as
representing a point $\bar p \in S$ and a unit vector $v\in T_p^1S$,
then a point $q = (\bar q, w)$ along $\g_0$ at distance $t$ from
$p$ could be written 
$$
q=\exp_{\bar p}tv = G_tp
$$
where $G_t$ denotes the geodesic flow in $T^1S$ for distance $t$.

Thus we see that as $L \to \infty$, the integral $I_2$ converges to 
$$
\lim_{L \to \infty} I_2 =
\int_{T^1S}\f{\im\Phi(p)}{g_0(p)}\int_0^{\infty}\f{e^{-t}}{2}(\f{\im\Phi(G_tp)}{g_0(G_t(p))}
+ \f{\im\Phi(G_{-t}p)}{g_0(G_{-t}(p))})\f{dtdp}{\vol(T^1S)}.
$$

Of course, as $p$ varies in the fiber $\{(\bar p, e^{i\theta}v)\}$
over $\bar p \in S$, we observe the points $G_t(\bar p,v)$ also
arising as $G_{-t}(\bar p,-v)$, and so we may rewrite the above limit
integral as
$$
\lim_{L \to \infty} I_2 =
\int_{T^1S}\f{\im\Phi(p)}{g_0(p)}\int_0^{\infty}e^{-t}\f{\im\Phi(G_tp)}{g_0(G_t(p))}
\f{dtdp}{\vol(T^1S)}. 
$$

To evaluate this last integral, imagine $\bar p =0$ in the disk 
$\{|z|<1\}$ and we represent the hyperbolic metric as 
$$
g_0 = \f{4|dz|^2}{(1-r^2)^2} = g_0(0)(1-r^2)^{-2}|dz|^2.
$$
An important matter here (as it was in the calculation of the 
first variation) is the question of how to interpret
the meaning of $\im\Phi(q)$ in these coordinates: recall 
that we understood $\im\f{\Phi}{g_0}$ to be the value
of $\im\f{\Phi}{g_0}$ when we defined the geodesic 
$\g_0$ as a vertical line in the coordinate system.
In the fixed coordinate system of the disk $\{|z|<1\}$,
write $\Phi = \phi(z)dz^2$; then since the hyperbolic geodesic
through the origin and the point $z=re^{i\theta}$ is given
by the ray $te^{i\theta}$, we see that 
$\im\Phi|_{re^{i\theta}}= \im e^{2i\theta}\phi(re^{i\theta})$.
With this notation, our integral becomes

\begin{align*}
&\lim_{L \to \infty} I_2 \\
&= \int_S\int_0^{2\pi}\f{\im
  e^{2i\theta}\phi(0)}{g_0(p)}e^{-t}\f{\im
  e^{2i\theta}\phi(r(t)e^{i\theta})}{g_0(p)(1-r(t)^2)^{-2}}\f{dtd\theta d\area}{2\pi\area(S)}\\
&=\int_S\int_0^{\infty}\f{e^{-t}(1-r(t)^2)^2}{g_0^2(p)}\int_0^{2\pi}\im(e^{2i\theta}\phi(0))\im(e^{2i\theta}\phi(r(t)e^{i\theta}))\f{dtd\theta
    d\area}{2\pi\area(S)}.
\end{align*}

Using the change-of-coordinates 
formula $e^{-t}dt = 2(1+r)^{-2}dr$ and the mean value theorem for
harmonic functions (together with a half-angle formula to simplify the
averaging), we find

\begin{align}\label{I2WP}
\lim_{L \to \infty} I_2 &= \int_S\f{1}{g_0(p)^2}\biggl(\int_0^1
2(1-r)^2dr\biggr)\f{|\phi(0)|^2}{2}\f{d\area}{\area(S)} \notag \\
&= \f{1}{3}\int_S \f{|\Phi(p)|^2}{g_0(p)^2}\f{d\area(p)}{\area(S)}.
\end{align}

Combining \eqref{I1WP} and \eqref{I2WP}, we verify
Theorem~\ref{thurston wp}.
In particular,

\begin{align*}
\lim_{n \lra \infty}\f{\text{Hess}\ell(\g_n)}{\ell(\g_n)} &=
\f{4}{3}\int_S \f{|\Phi(p)|^2}{g_0(p)^2}\f{d\area(p)}{\area(S)}\\
&=\f{4}{3\area(S)}\|\mu\|_{\text{WP}}^2.
\end{align*}

\begin{remark}
The constant $\f{4}{3}$ found here agrees with that found by Wolpert
\cite{wolpert:thurston}
and McMullen \cite{mcmullen:thermodynamics}.  See the comments (\cite{mcmullen:thermodynamics}, p.376) of McMullen
on the consistency of conventions.
\end{remark}

\bibliographystyle{alpha} 
\bibliography{liter12-08}

\end{document}